\theoremstyle{plain}
\newtheorem{theorem}{Theorem}[section]
\newtheorem{corollary}[theorem]{Corollary}
\newtheorem{proposition}[theorem]{Proposition}
\newtheorem{lemma}[theorem]{Lemma}
\theoremstyle{definition}
\newtheorem{remark}[theorem]{Remark}
\theoremstyle{remark}
\numberwithin{theorem}{section}
\numberwithin{equation}{section}
\numberwithin{figure}{section}
\def\R{\mathbb{R}}
\def\S{\mathbb{S}}
\def\1{{\bf 1}}
\def\e{\mathrm{e}}
\def\d{\mathrm{d}}
\def\del{\delta}
\def\l{\lambda}
\def\nab{\nabla}
\def\om{\omega}
\def\Om{\Omega}
\def\vep{\varepsilon}
\def\Up{\Upsilon}
\DeclareMathOperator*{\argmax}{argmax}
\DeclareMathOperator{\co}{co}
\DeclareMathOperator{\dist}{dist}
\DeclareMathOperator{\dom}{dom}
\DeclareMathOperator{\Id}{Id}
\DeclareMathOperator*{\osc}{osc}
\DeclareMathOperator{\spt}{spt}
\def\pa{\partial}
\def\Gexp{{\mathbf G}\textnormal{-exp}}
\def\Grexp{{\mathbf G}_r\textnormal{-exp}}
\def\Hexp{{\mathbf H}\textnormal{-exp}}
\def\0{{\mathbf 0}}
\def\E{{\mathbf E}}
\def\G{{\mathbf G}}
\def\H{{\mathbf H}}
\def\S{{\mathbf S}}
\def\T{{\mathbf T}}
\def\u{{\mathbf u}}
\def\v{{\mathbf v}}
\def\V{{\mathbf V}}
\def\w{{\mathbf w}}
\def\B{\mathcal{B}}
\def\C{\mathcal{C}}
\def\El{\mathcal{E}}
\def\K{\mathcal{K}}
\def\O{\mathcal{O}}
\def\N{{\mathcal{N}}}
\begin{document}

\title[Partial Regularity in Generated Jacobian Equations]{Partial Regularity of Solutions to the Second Boundary Value Problem for Generated Jacobian Equations}

\author[Y.\ Jhaveri]{Yash Jhaveri$^\dagger$}
\thanks{$^\dagger$\,Supported in part by the ERC grant ``Regularity and Stability in Partial Differential Equations (RSPDE)''}
\address{ETH Z\"{u}rich, Department of Mathematics, R\"{a}mistrasse 101, Z\"{u}rich 8092, Switzerland}
\email{yash.jhaveri@math.ethz.ch}

\begin{abstract}
We prove that outside of a closed singular set of measure zero solutions to the second boundary value problem for generated Jacobian equations are smooth.
\end{abstract}
\maketitle
\vspace{-0.75cm}


\section{Introduction}

In this paper, we begin the development of a partial regularity theory for solutions to the second boundary value problem for a class of prescribed Jacobian equations called generated Jacobian equations.
A {\it prescribed Jacobian equation} (PJE) takes the form
\begin{equation}
\label{eqn: PJE}
\det(\nab_x[\T(x,\u(x),\nab \u(x))]) = \psi(x,\u(x),\nab \u(x))
\end{equation}
where $\T = \T(x,u,p) : \dom \T \subset \Om \times \R \times \R^n \to \R^n$ and $\psi = \psi(x,u,p) : \Om \times \R \times \R^n \to \R$; and the {\it second boundary value problem} (SBVP) asks that 
\begin{equation}
\label{eqn: boundary data}
\T_\u(\Om) = \Up
\end{equation}
for some given $\Up \subset \R^n$.
Here, $\Om$ and $\Up$ are open sets and $\T_\u(x) := \T(x,\u(x),\nab \u(x))$.
We consider the specific case when this prescription is given through the push-forward condition 
\[
(\T_\u)_\# f = g
\] 
for two probability densities $f$ and $g$ supported in $\Om$ and $\Up$ respectively.
This corresponds to
\begin{equation}
\label{eqn: RHS}
\psi(x,\u(x),\nab \u(x)) = \frac{f(x)}{g(\T(x,\u(x),\nab \u(x)))}.
\end{equation}
When the map $\T$ is generated by a function $\G : \dom \G \subset \R^n \times \R^n \times \R \to \R$, we find ourselves in the world of {\it generated Jacobian equations} (GJEs), and we call $\T_\u$ the {\it transport map associated to $\G$ and $\u$}.

PJEs, in particular, GJEs, encompass many problems in analysis, economics, and geometry (see \cite{GK} for a discussion of some of these problems as well as the many references therein).
The simplest PJE, when $\T = \T(p) = p$, is the Monge--Amp\`{e}re equation, and the SBVP corresponds to the optimal transportation problem for quadratic cost.
Here, smooth data does not ensure the existence of a smooth solution.
Rather, the problem requires an additional, strong geometric condition on the support of the target density $g$ for such a statement to hold.
Specifically, we need $\Up$ to be convex, as Caffarelli showed in \cite{C}.
(See \cite{J} for an investigation of how important, quantitatively, the convexity of $\Up$ is in guaranteeing the regularity of $\nab \u$.)
When $\T$ is highly non-linear in its variables, the complexity of the problem is compounded.
If $\T = \T(x,p)$, as it does in the optimal transport problem for general cost, not only do we need to place geometric restrictions on $\Up$, but the associated generator $\G$ of $\T$ must obey certain structural conditions to first ensure the production of $\T$ and second guarantee the regularity of solutions.
In the most complex situations, wherein $\T = \T(x,u,p)$, analogous geometric conditions on the target domain and structure conditions on $\G$ are still insufficient to yield regular solutions.
This phenomenon is exhibited, for instance, in the reflector shape design problem: Karakhanyan and Wang, in \cite{KW}, showed that smooth data may produce distinct solutions with vastly different regularity.\footnote{\,In the optimal transport problem, potentials $\u$ are unique up to the addition of a constant.
In the near-field reflector problem, however, we find that solutions may not be unique in any natural sense.}

The distinguishing feature of a general GJE from the optimal transport case is the map $\T$ may depend on the values of the potential $\u$. 
This feature will be the source of the challenges faced in this work.
The third coordinate of $\G$ in the optimal transport case for cost $c$ is a simple height parameter.
Changes in this variable translate to vertical shifts in the graph of $\G(x,y,v) = -c(x,y) - v$.
In general, changes in the third variable of an arbitrary generating function affect the shape of the graph $\G$ (see, e.g., \cite{GK,KW,T}).

As noted, the GJE setting is one in which the map $\T$ is produced from another $\G$.
In the optimal transport problem for cost $c$, the map $\T$ is generated by the equation
\[
D_x \G(x,\T) = - D_x c(x,\T) = p.
\]
Generally, the map $\T$ (along with another $\V$) is generated through the system of equations
\[
\begin{cases}
D_x\G(x,\T,\V) = p \\
\G(x,\T,\V) = u.
\end{cases}
\]
As such, $\G$ must satisfy a collection of basic structure conditions to produce $\T$.

\subsection{Structure of $\G$}
\label{sec: structure}

Our starting assumptions are three-fold: 1. $\dom \G = X \times Y \times I$ where $X, Y \subset \R^n$ are open and $I \subset \R$ is an open interval, 2. $\G$ is of class $C^{2,\alpha}_{\rm loc}(X \times Y \times I)$ for some $\alpha \in (0,1)$, and 3.
\[ 
D_v\G(x,y,v) < 0. \tag{G-Mono}
\]
Up to a change of variables, we let $I = \R$.

The remaining structure conditions on $\G$ will hold on a subset of the domain of $\G$:
\[
\mathfrak{g} := \{ (x,y,v) : v \in V_{x,y} \};
\]
for each pair $(x,y) \in X \times Y$, the set $V_{x,y}$ is some open interval (possibly empty).
We assume that $\mathfrak{g}$ is open.
In keeping with the nomenclature of \cite{GK}, the final structure conditions we impose on $\G$ are as follows.
First, we ask that the map 
\[
(y,v) \mapsto (D_x\G(x,y,v),\G(x,y,v)) \text{ is injective on } \{ (y,v) : (x,y,v) \in \mathfrak{g} \}. \tag{G-Twist}
\] 
Second, we assume that the map 
\[
x \mapsto - \frac{D_y\G (x,y,v)}{D_v\G(x,y,v)} \text{ is injective on } \{ x : (x,y,v) \in \mathfrak{g} \}. \tag{G*-Twist}
\]
Third, we suppose that
\[
\det\bigg(D_{xy}\G - D_{xv}\G \otimes \frac{D_y\G}{D_v\G}\bigg) \neq 0
\text{ on } \mathfrak{g}. \tag{G-Nondeg}
\]

We shall make some remarks on these conditions in Section~\ref{sec: prelim}.

\subsection{Statement of Main Result}

In \cite{T}, the local regularity of solutions to our SBVP is also studied. 
Under a pair of higher-order structural assumptions on the generating function $\G$ and geometric restrictions on the open, bounded sets $\Om$ and $\Up$, solutions are proved to be smooth (given smooth densities bounded away from zero and infinity in $\Om$ and $\Up$ respectively), and the transport associated to $\G$ and $\u$ is shown to be a diffeomorphism from $\Om$ onto $\Up$.
These assumptions are extensions of the MTW conditions on the cost $c$ and the $c$-convexity and $c^*$-convexity requirements on the source and target domains in the optimal transport problem for general cost (see \cite{MTW}).
We refer the reader to \cite{GK} for other results on the regularity of solutions to general generated Jacobian equations under different, but related, additional conditions on the structure of $\G$ and on the geometry of the domains of the equation.

The purpose of this paper is to show that solutions to \eqref{eqn: PJE} -- \eqref{eqn: RHS} are smooth outside a singular set of measure zero without the presence of any additional structural or geometric conditions.
Precisely, our main result is the following:

\begin{theorem}
\label{thm: main}
Let $\G$ and $\mathfrak{g}$ be as in Section~\ref{sec: structure} and $\Om \subset X$ and $\Up \subset Y$ be two open, bounded sets.
Suppose $f : \Om \to \R^+$ and $g : \Up \to \R^+$ are two continuous probability densities bounded away from zero and infinity and $\u : \Om \to \R$ is a $\G$-convex function such that $(\T_\u)_\# f = g$.
Then, for every $\beta < 1$, there exist two relatively closed sets $\mathscr{S}_\Om \subset \Om$ and $\mathscr{S}_\Up \subset \Up$ of measure zero such that $\T_\u : \Om \setminus \mathscr{S}_\Om \to \Up \setminus \mathscr{S}_\Up$ is a homeomorphism of class $C^{0,\beta}_{\rm loc}$.
If, in addition, $\G \in C^{k+2,\alpha}_{\rm loc}(\Om \times \Up \times \R)$, $f \in C^{k,\alpha}_{\rm loc}(\Om)$, and $g \in C^{k,\alpha}_{\rm loc}(\Up)$ for some $k \geq 0$ and $\alpha \in (0,1)$, then $\T_\u : \Om \setminus \mathscr{S}_\Om \to \Up \setminus \mathscr{S}_\Up$ is a diffeomorphism of class $C^{k+1,\alpha}_{\rm loc}$.
\end{theorem}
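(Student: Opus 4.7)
The plan is to adapt the partial regularity framework of Figalli--Kim and De Philippis--Figalli (developed for optimal transportation) to the generated Jacobian setting. The principal new difficulty is that the third argument $v$ of $\G$ genuinely reshapes the graph of the $\G$-affine support function $\G(\cdot,y,v)$, so one must track its behavior carefully along every renormalization.

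First, I would introduce the $\G$-subdifferential $\partial^{\G}\u(x_0)$ and $\G$-sections of the form $S_h(x_0,y_0,v_0) := \{x\in\Om : \u(x) < \G(x,y_0,v_0)+h\}$, where $\G(\cdot,y_0,v_0)$ is a $\G$-affine support of $\u$ at $x_0$. Using (G-Mono), (G-Twist), (G*-Twist) and (G-Nondeg), these sections are open neighborhoods of $x_0$ on which $\T_\u$ is well-defined and the pushforward $(\T_\u)_\#(f\cdot\1_{S_h})$ is supported in $\T_\u(S_h)$. I would then declare $x_0\in\Om$ to be a \emph{regular point} if some sufficiently small $\G$-section about $x_0$ is compactly contained in $\Om$ and has $\T_\u$-image compactly contained in $\Up$. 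Because $\u$ is $\G$-convex and $f,g$ are bounded away from $0$ and $\infty$, an Alexandrov-type a.e.\ twice-differentiability property combined with the pushforward relation $(\T_\u)_\#f=g$ forces the set of non-regular points to have Lebesgue measure zero in $\Om$; together with the analogous argument for the dual potential $\u^*$ on $\Up$, this identifies the relatively closed zero-measure sets $\mathscr{S}_\Om\subset\Om$ and $\mathscr{S}_\Up\subset\Up$.

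Next, I would work at a fixed regular point with chosen support data $(x_0,y_0,v_0)$ and perform a $\G$-affine renormalization: subtract $\G(\cdot,y_0,v_0)$, apply the John affine normalization of a section $S_h$, and rescale $h$. The key quantitative input is a compactness argument showing that, as $h\to 0$, the normalized $\G$-sections and their $\T_\u$-images are trapped between ellipsoids of comparable eccentricity (\emph{almost convexity} of sections at regular points). This plays the role of Caffarelli's convexity hypothesis on the target in the Monge--Amp\`ere case: once it is in place, a perturbative comparison of renormalized potentials with genuine solutions of a Monge--Amp\`ere equation with bounded densities on a convex target, via Caffarelli's $C^{1,\alpha}$ theory and Savin's small-perturbation theorem, yields $C^{1,\beta}_{\rm loc}$ regularity of $\u$ on $\Om\setminus\mathscr{S}_\Om$ for every $\beta<1$, hence the $C^{0,\beta}_{\rm loc}$ homeomorphism statement.

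The main obstacle will be propagating control of the height variable $v$ through this renormalization procedure, since in the optimal transport case $v$ shifts trivially whereas here the $\u$-dependence genuinely deforms the graph of $\G$. To handle this I would invoke (G-Mono) to solve for $v$ implicitly along sections and then use (G-Nondeg) to keep the relevant Jacobians uniformly bounded on a neighborhood of $(x_0,y_0,v_0)$; this uniformity is what makes normalized potentials precompact and enables the limiting Monge--Amp\`ere comparison. Once the $C^{0,\beta}_{\rm loc}$ homeomorphism conclusion is established, the higher regularity claim follows by a standard Schauder bootstrap: on the open set $\Om\setminus\mathscr{S}_\Om$ the transport $\T_\u$ is now continuous and injective, so equations \eqref{eqn: PJE}--\eqref{eqn: RHS} become uniformly elliptic, and the additional regularity of $\G$, $f$, and $g$ lifts to $\T_\u\in C^{k+1,\alpha}_{\rm loc}$ via classical Schauder estimates for fully nonlinear elliptic PDE.
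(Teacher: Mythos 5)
Your first-order strategy matches the paper's: localize at an a.e.\ point of twice differentiability of both $\u$ and its $\G$-transform $\v$, normalize $\G$ and $\u$ so that $\G\approx x\cdot y - v$ and $\u\approx\tfrac12|x|^2$ after a parabolic rescaling, prove an $\vep$-regularity theorem via compactness with a Monge--Amp\`ere limit, and track the height variable $v$ using (G-Mono), (G-Twist), (G-Nondeg). The paper iterates directly to get $C^{1,\beta}$ rather than invoking Savin, but that is a cosmetic difference. Two items are genuine gaps, however. First, for the regular/singular decomposition you must verify that the singular set in the target $\mathscr{S}_\Up$ is relatively closed, which requires showing $\T_\u$ is an \emph{open} map on the regular set; this is not automatic from continuity or from the sections shrinking to a point, and the paper derives it from a strict $\G$-convexity estimate (Corollaries~\ref{cor: strict Cconvexity}--\ref{cor: Tu is locally open}) obtained from the iteration, not from soft arguments. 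Your sketch asserts closedness without this step.

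Second, and more seriously, the jump from $C^{1,\beta}$ to $C^{2,\alpha}$ in the higher-regularity claim cannot be reduced to ``classical Schauder estimates for fully nonlinear elliptic PDE.'' After $C^{1,\beta}$, the Monge--Amp\`ere-type equation \eqref{eqn: pointwise jacobian eqn} has $\T_\u$ depending on $\u$ and $\nab\u$ through the implicitly defined $v=\H(x,\T_\u,\u)$, so the right-hand side is only $C^{0,\beta}$, uniform convexity of $D^2\u - D^2_x\G$ is not yet known, and there is no a priori convexity of the target in a generalized sense that would let one quote Caffarelli's $W^{2,p}$ or $C^{2,\alpha}$ interior theory directly. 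The paper has to build an entirely new comparison-type principle for $\G$-convex functions against genuine Monge--Amp\`ere solutions (Proposition~\ref{prop: comparison}), using the change-of-variables Lemma~\ref{lem: cov Gexp}, and then run a separate iteration (Proposition~\ref{prop: hr ptwise}) to obtain pointwise $C^{2,\alpha'}$; only \emph{after} that does a Schauder bootstrap upgrade to $C^{k+2,\alpha}$. The construction of barriers $\w^\pm$ in Proposition~\ref{prop: comparison} again requires moving the $v$-parameter up and down so that $\G$-supports touch from below, which is exactly where the new $v$-dependence bites. This is the heart of Section~\ref{sec: HR} and is not covered by your bootstrap remark.
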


Notice that when $f$ and $g$ are just assumed to be continuous, the regular sets depend on the value of $\beta$.
In the higher regularity cases, the regular sets are independent of the values of $k$ and $\alpha$.
Recall that the SBVP for GJEs may, in general, have many solutions, all with potentially different regularity properties (again, see \cite{KW,GK}).
Yet by Theorem~\ref{thm: main}, outside sets of measure zero, all of this variety is unseen.

The proof of Theorem~\ref{thm: main} follows the global strategy of its optimal transport predecessor \cite[Theorem~1.3]{DF}.
However, new difficulties arise here coming from the additional non-linear nature of general generating functions over those that arise in optimal transportation and the non-existence of a Kantorovich formulation of the problem.
In particular, the third component $v$ of $\G$ plays no role in \cite{DF}, while its presence here is pervasive.

As far as we know, Theorem~\ref{thm: main} is the first partial regularity result on general GJEs.
That said, in the optimal transport setting, the first partial regularity result was proved by Figalli in two dimensions for quadratic cost in \cite{F}.
This two dimensional, quadratic cost result was subsequently pushed to arbitrary dimension by Figalli and Kim in \cite{FK} and then again by Goldman and Otto in \cite{GO}.
In \cite{DF}, De Philippis and Figalli extended these last quadratic cost results to general cost, while Chen and Figalli proved a partial Sobolev regularity result for general cost in \cite{CF17}.
Finally, we mention that the $\vep$-regularity techniques developed by De Philippis and Figalli, in \cite{DF}, and exploited here have been used to prove regularity results at the boundary for optimal transports in \cite{CF15} and \cite{J}.

\subsection{Organization}
This paper has four additional sections.
In Section~\ref{sec: prelim}, we introduce some more notation and some preliminary results.
Section~\ref{sec: proof of main thm} is dedicated to the proof Theorem~\ref{thm: main}.
Finally, in the last two sections, we prove the local regularity results around which the proof of our main result revolves.

\section{Preliminaries}
\label{sec: prelim}

In this section, we introduce some notation and preliminary results.
We start with some remarks on $\G$ and the structure conditions it obeys.
Then, we visit the geometry of solutions to the SVBP for GJEs.
Finally, we show that solutions to \eqref{eqn: PJE} -- \eqref{eqn: RHS} satisfy a Monge--Amp\`{e}re-type equation almost everywhere.

\subsection{Structure and Duality}
\label{sec: duality of structure conditions}

The assumption that $\mathfrak{g}$ is open is mild.
For instance, in the near-field reflector/reflector shape design problem \cite{KW}, an important model setting for the SBVP for general GJEs\,---\,wherein we have non-uniqueness of solutions and varying regularity among solutions\,---\,the set $\mathfrak{g}$ is open (see \cite[Section 3.1]{GK}).
More generally, as far as we know, the set $\mathfrak{g}$ is open in all examples of GJEs.

Thanks to (G-Mono), there exists a unique function $\H$ determined by the equation
\[
\G(x,y,\H(x,y,u)) = u,
\]
and $\H(x,y,\cdot)$ is well-defined on the (non-empty) open interval $\G(x,y,\R)$.
We call $\H$ the {\it dual} of $\G$.
In the optimal transport case, $\H(x,y,u) = -c(x,y)-u$, and we see that $\G(x,y,\R) = \R$ for all pairs $(x,y)$.
Generally, however, $\G(x,y,\R)$ maybe not be $\R$ for any pair $(x,y)$.
In addition, we define the set
\[
\mathfrak{h} := \{ (x,y,u) : u \in U_{x,y} \}
\]
with $U_{x,y} := \G(x,y,V_{x,y})$.
As $\mathfrak{g}$ is open, (G-Mono) and the continuity of $D_v\G$ together imply that the set $\mathfrak{h}$ is also open.
Hence, $\H$ is locally $C^{2,\alpha}$ on $\mathfrak{h}$ and
\[
D_u\H(x,y,u) < 0 \tag{H-Mono}.
\]
We can see that the map 
\[
(x,u) \mapsto (D_y\H(x,y,u),\H(x,y,u)) \text{ is injective on } \{ (x,u) : (x,y,u) \in \mathfrak{h} \}, \tag{H-Twist}
\] 
the map 
\[
y \mapsto - \frac{D_x\H (x,y,u)}{D_u\H(x,y,u)} \text{ is injective on } \{ y : (x,y,u) \in \mathfrak{h} \}, \tag{H*-Twist}
\]
and
\[
\det\bigg(D_{yx}\H - D_{yu}\H \otimes \frac{D_x\H}{D_u\H}\bigg) \neq 0
\text{ on } \mathfrak{h}. \tag{H-Nondeg}
\]
In particular, (G-Twist) and (H*-Twist), (G*-Twist) and (H-Twist), and (G-Nondeg) and (H-Nondeg) are respectively equivalent (see \cite[Remark 9.5]{GK} and \cite{T}).

Moreover, with $\H$, we can generate the map $\S$ and look to solve the dual generated Jacobian equation
\begin{equation}
\label{eqn: dual GJE}
\det(\nab_y[\S(y,\v(y),\nab \v(y))]) = \frac{g(y)}{f(\S(y,\v(y),\nab \v(y)))}.
\end{equation}

We will often use the following first-order identities:
\[
D_x \H = - \frac{D_x \G}{D_v \G}, \qquad D_y \H = - \frac{D_y \G}{D_v \G}, \qquad\text{and}\qquad D_u \H = \frac{1}{D_v \G}
\]
and the following second-order identities:
\[
D_{xu} \H = -\frac{D_{xv} \G + D^2_v\G D_x\H}{(D_v \G)^2}, \quad D_{yu} \H = -\frac{D_{yv} \G + D^2_v\G D_y\H}{(D_v \G)^2}, \quad\text{and}\quad D_{u}^2\H = -\frac{D_v^2 \G}{(D_v \G)^3}.
\]
Here, the derivatives of $\H$ are taken at $(x,y,u)$ and the derivatives of $\G$ at $(x,y,\H(x,y,u))$ provided, of course, $u \in \G(x,y,\R)$.
These identities are simple consequences of (G-Mono).

Now let $\E$ be the $n \times n$ matrix from (G-Nondeg):
\begin{equation}
\label{def: matrix E}
\E(x,y,v) := \bigg[ D_{xy}\G - D_{xv}\G \otimes \frac{D_y\G}{D_v\G} \bigg](x,y,v).
\end{equation}
Notice that the Jacobian determinants of the maps in (G-Twist) and (G*-Twist) are
\[
|D_v\G(x,y,v)|^n|\det (\E(x,y,v))| \qquad\text{and}\qquad |D_v\G(x,y,v)|^{-n}|\det (\E(x,y,v))|
\]
respectively.

\subsection{$\G$-convexity}
\label{sec: prelim Gconvex}

Solutions to \eqref{eqn: PJE} -- \eqref{eqn: RHS} are $\G$-convex functions.
Let us recall the definition of $\G$-convexity and some related facts, definitions, and characteristics.
We say that a function $\u : X \to \R$ is {\it $\G$-convex} if for every $x_0 \in X$, there exists a {\it focus} $(y_0,v_0) \in Y \times \R$ such that 
\[
(x_0,y_0,v_0) \in \mathfrak{g}
\]
and
\[
\u(x_0) = \G(x_0,y_0,v_0) \qquad\text{and}\qquad \u(x) \geq \G(x,y_0,v_0) \quad \forall x \in X.
\]
Notice that if $(y_0,v_0)$ and $(y_0,v_1)$ are foci for a $\G$-convex function $\u$ at the point $x_0$, then by (G-Mono), $v_0  = v_1 =  \H(x_0,y_0,\u(x_0))$.
So we can recast our definition and say that $\u : X \to \R$ is $\G$-convex if for each $x_0 \in X$, there exists a point $y_0 \in Y$ such that
\[
(x_0,y_0,\H(x_0,y_0,\u(x_0))) \in \mathfrak{g}
\]
and
\[
\u(x_0) = \G(x_0,y_0,\H(x_0,y_0,\u(x_0))) \qquad\text{and}\qquad \u(x) \geq \G(x,y_0,\H(x_0,y_0,\u(x_0))) \quad \forall x \in X.
\]

For a $\G$-convex function $\u : X \to \R$, we define its {\it $\G$-subdifferential} at $x_0$ to be the (non-empty) set
\begin{equation}
\label{def: gsub}
\pa_\G \u(x_0) := \{ y \in Y : \u(x) \geq \G(x,y,\H(x_0,y,\u(x_0))) \quad \forall x \in X \}
\end{equation}
provided
\begin{equation}
\label{eqn: gsub admiss}
(x_0,y,\H(x_0,y,\u(x_0))) \in \mathfrak{g}.
\end{equation}
Given $y \in \pa_\G \u(x_0)$, we call
\[
\mathscr{G}_{x_0,y,v}(\cdot) := \G(\cdot,y,v)
\]
with $v := \H(x_0,y,\u(x_0))$ a {\it $\G$-support} of $\u$ at $x_0$.
Recalling the {\it Fr\'{e}chet subdifferential} of a function $\u$ at $x_0$:
\[
\pa^- \u(x_0) := \{ p \in \R^n : \u(x) \geq \u(x_0) + p \cdot (x - x_0) + o(|x - x_0|) \},
\]
with $o(|x - x_0|) \to 0$ as $x \to x_0$, we see that
\begin{equation}
\label{eqn: Gsub in Fsub}
y \in \pa_\G \u(x_0) \quad\Rightarrow\quad D_x\G(x_0,y,\H(x_0,y,\u(x_0))) \in \pa^- \u(x_0).
\end{equation}
For $E \subset X$, we set
\[
\pa_\G \u(E) := \bigcup_{x \in E} \pa_\G \u(x) \qquad\text{and}\qquad  \pa^- \u(E) := \bigcup_{x \in E} \pa^- \u(x).
\]
\begin{remark}
In the optimal transport setting, the geometric condition \eqref{def: gsub} alone dictates whether or not a point $y$ is in the $\G$-subdifferential of $\u$ at $x_0$.
The admissibility condition \eqref{eqn: gsub admiss} always holds.
Yet this is not the case in general.
A simple but important consequence of this is that the $\G$-subdifferential may not be continuous in the way the $c$-subdifferential is for $c$-convex functions.
For instance, in the quadratic cost case, $c$-convexity is convexity, and given $y_k \in \pa^- \u(x_k)$ such that $y_k \to y_0$ and $x_k \to x_0$, we know that $y_0 \in \pa^- \u(x_0)$.
However, if we replace $\pa^- \u$ with $\pa_\G \u$, this implication may not hold.
\end{remark}

Akin to the Legendre transform, we define the {\it $\G$-transform} of $\u$ to be the $\H$-convex function given by
\begin{equation}
\label{def: G-transform}
\u_\G(y) := \sup_{x \in X} \H(x,y,\u(x)).
\end{equation}
In actuality, the supremum here is taken over those $x \in X$ such that $\H(x,y,\u(x))$ is defined; the $\G$-convexity of $\u$ implies that $\u(x) \in U_{x,y}$ whenever $y \in \pa_\G \u(x)$, and so the supremum is over a non-empty set.
Moreover, as noted in \cite[Section 4]{T}\footnote{\,While \eqref{eqn: Gtransforminvertible} and \eqref{eqn: Gsubdiff invertible} are mentioned in \cite[Section 4]{T}, they are not proved.
For completeness, we prove them here.
Let $y_0 \in \pa_\G \u(x_0)$ and $v_0 := \H(x_0,y_0,\u(x_0))$.
By definition, $(x_0,y_0,\u(x_0)) \in \mathfrak{h}$.
If $x_0$ is the only point at which $\H(\cdot,y_0,\u(\cdot))$ is defined, then $x_0 \in \pa_\H \u_\G(y_0)$ trivially.
On the other hand, let $x \in X$ be such that $\H(x,y_0,\u(x))$ is well-defined.
Since $\u(x) \geq \G(x,y_0,v_0)$ for all $x \in X$, we see that
\[
\H(x,y_0,\u(x)) \leq \H(x,y_0,\G(x,y_0,v_0)) = v_0 = \H(x_0,y_0,\u(x_0)).
\]
By construction, $\H(x,y_0,\cdot)$ can be evaluated at $\G(x,y_0,v_0)$.
Hence, the supremum in \eqref{def: G-transform} is achieved at $(x_0,y_0,\u(x_0))$.
That is, if  $y_0 \in \pa_\G \u(x_0)$, then $x_0 \in \pa_\H \u_\G (y_0)$ and $\u_\G(y_0) = \H(x_0,y_0,\u(x_0))$.
It then follows that
\[
\u_{\G\H}(x_0) \geq \G(x_0,y_0,\u_\G(y_0)) = \G(x_0,y_0,\H(x_0,y_0,\u(x_0))) = \u(x_0).
\]
A symmetric argument, taking $\u_\G$ in place of $\u$ and $\u_{\G\H}$ in place of $\u_\G$ above, yields that
\[
x_0 \in \pa_\H \u_\G(y_0) \quad\Rightarrow\quad y_0 \in \pa_\G \u_{\G\H}(x_0)
\]
and $\u(x_0) = \G(x_0,y_0,\H(x_0,y_0,\u(x_0))) \geq \G(x_0,y_0,\u_\G(y_0)) = \u_{\G\H}(x_0)$.
},
\begin{equation}
\label{eqn: Gtransforminvertible}
\u_{\G\H} = \u
\end{equation}
where $\v_\H(x) := \sup_{y \in Y} \G(x,y,\v(y))$ is the $\H$-transform of a given $\H$-convex function $\v : Y \to \R$, and
\begin{equation}
\label{eqn: Gsubdiff invertible}
y \in \pa_\G \u(x) \quad\Leftrightarrow\quad x \in \pa_\H \u_\G(y).
\end{equation}
For the $\H$-subdifferential, the analogue of the admissibility condition \eqref{eqn: gsub admiss} is
\[
(x,y_0,\G(x,y_0,\v(y_0))) \in \mathfrak{h},
\]
for $x \in \pa_\H \v(y_0)$.
Because we have assumed that $\G$ is of class $C^2_{\rm loc}(X \times Y \times \R)$, we find that $\G$-convex functions are locally semiconvex.
(The semiconvexity constant of $\u$ in a set depends only on the $C^0$-norm of $D^2_x\G$ in that set.)
In particular, $\G$-convex functions are locally uniformly Lipschitz and twice differentiable at almost every point (see, e.g., \cite{FF}).
This basic regularity will be the foundation of our analysis.

Since $\G$ satisfies (G-Twist) and (G-Nondeg), we can generate the maps $\Gexp_{x,u}(\cdot)$ and $\V_x(\cdot,\cdot)$ from the pair of equations
\[
\begin{cases}
D_x\G(x,\Gexp_{x,u}(p),\V_x(u,p)) = p \\
\G(x,\Gexp_{x,u}(p),\V_x(u,p)) = u 
\end{cases}
\quad\forall (p,u) \in (D_x\G,\G)(\{(x,y,v) : (x,y,v) \in \mathfrak{g}\}).
\]
Here, $D_x \G$ is evaluated at the point $(x,\Gexp_{x,u}(p),\V_x(u,p))$.
In other words,
\[
\Gexp_{x,u}(p) = y \quad\Leftrightarrow\quad p = D_x\G(x,y,\H(x,y,u))
\]
and
\[
\V_x(u,p) = \H(x,\Gexp_{x,u}(p),u)
\]
so long as $(x,y,\H(x,y,u)) \in \mathfrak{g}$.
And so \eqref{eqn: Gsub in Fsub} can be rewritten as
\begin{equation}
\label{eqn: Gsub in Gexp}
\pa_\G \u(x_0) \subset \Gexp_{x_0,\u(x_0)}(\pa^- \u(x_0)).
\end{equation}
When applying $\Gexp_{x_0,\u(x_0)}(\cdot)$ to $p \in \pa^-\u(x_0)$, we only consider those $p = D_x\G(x_0,y,\H(x_0,y,\u(x_0)))$ such that $\u(x_0) \in U_{x_0,y}$.
Hence, we see that if  $\u$ is differentiable at $x_0$, then $\pa_\G \u(x_0)$ is a singleton $\{y_0\}$ and
\begin{equation}
\label{eqn: first derivative}
\nab \u(x_0) = D_x\G(x_0,y_0,v_0),
\end{equation}
and if $\u$ is twice differentiable at $x_0$, then 
\begin{equation}
\label{eqn: second derivative}
D^2 \u(x_0) \geq D^2_x \G(x_0,y_0,v_0).
\end{equation}
In \eqref{eqn: first derivative} and \eqref{eqn: second derivative}, $v_0 := \H(x_0,y_0,\u(x_0))$.
Finally, given a $\G$-convex function $\u : X \to \R$, let us define the map (at almost every $x \in X$) $\T_\u$ by
\[
\T_\u(x) := \Gexp_{x,\u(x)}(\nab \u(x)).
\]
Even though $\T_\u$ depends on $\u$ and $\G$, we shall often suppress the second dependence for notational simplicity.

\subsection{A Monge--Amp\`{e}re-type Equation}

Set $\v := \u_\G$.
Then, $\v$ is a solution to the SBVP for \eqref{eqn: dual GJE}, the dual equation (see \cite[Lemma~4.1]{T}).
In particular, by \cite[Lemma 4.1]{T} and the remarks just before it, $(\S_\v)_\# g = f$. 
Recall that $\u$ and $\v$ are twice differentiable almost everywhere; let $\Om_1$ and $\Up_1$ be the full (Lebesgue) measure subsets of $\Om$ and $\Up$ respectively on which $\u$ and $\v$ are respectively twice differentiable.
By \eqref{eqn: Gsubdiff invertible} and \eqref{eqn: Gsub in Gexp}, we see $\T_\u$ and $\S_\v$ are inverses of one another in the sense that 
\begin{equation}
\label{eqn: transport is invertible}
\S_\v(\T_\u(x)) = x \quad\forall x  \in \Om_1 \qquad\text{and}\qquad \T_\u(\S_\v(y)) \quad\forall y \in \Up_1.
\end{equation}
Here, of course, $\S_\v(y) := \Hexp_{y,\v(y)} (\nab \v(y))$. 
Since $(\T_\u)_\# f = g$, we can apply \cite[Theorem~11.1]{V} to deduce that
\[
|\det (\nab \T_\u(x))| = \frac{f(x)}{g(\T_\u (x))} \quad \forall x \in \Om_1.
\]
Then, \eqref{eqn: first derivative} and \eqref{eqn: second derivative} imply that
\begin{equation}
\label{eqn: pointwise jacobian eqn}
\begin{split}
\det(D^2\u(x) - &D^2_x\G(x,\T_\u(x),\H(x,\T_\u(x),\u(x))) \\
&= |\det (\E(x,\T_\u(x),\H(x,\T_\u(x),\u(x))))|\frac{f(x)}{g(\T_\u(x))} \quad\text{a.e.}
\end{split}
\end{equation}
Since by (G-Nondeg), $\E$ has non-zero determinant, the nondegeneracy of the right-hand side of our Monge--Amp\`{e}re-type equation \eqref{eqn: pointwise jacobian eqn} is preserved.
(See \cite{T} for more details.)
In conclusion, a solution $\u$ to \eqref{eqn: PJE} -- \eqref{eqn: RHS} satisfies a Monge--Amp\`{e}re-type equation almost everywhere.


\section{Proof of Theorem~\ref{thm: main}}
\label{sec: proof of main thm}

Set $\v$ to be the $\G$-transform of $\u$ and let $\Om_1$, $\Up_1$, and $\S_\v$ be as in \eqref{eqn: transport is invertible}.
Consider the set
\[
\Om_2 := \Om_1 \cap \T_\u^{-1} (\Up_1) \subset \Om,
\]
and observe that $|\Om \setminus \Om_2| = 0$ since $(\T_\u)_\# f = g$ and the densities $f$ and $g$ are bounded away from zero and infinity.
Recall that $(\S_\v)_\# g = f$.

Fix $x' \in \Om_2$.
Since $x'$ is a point of differentiability for $\u$, the $\G$-subdifferential of $\u$ at $x'$ is a singleton (see \eqref{eqn: Gsub in Gexp}): $\pa_\G \u(x') = \{ \T_\u(x') \}$.
Set $y' := \T_\u(x')$ and $v' := \H(x',\T_\u(x'), \u(x')))$.
Note that $y' \in \Up_1$.
Up to a translation, we can assume that $(x',y',v') = (0,0,0)$.
Furthermore, up to subtracting $\G(\cdot,0,0)$, we can assume that $\u(0) = 0$ and its $\G$-support at $(0,0,0)$ is identically zero; that is, $\mathscr{G}_{0,0,0}(x) = 0$ and $\u(x) \geq 0$ for all $x \in \Om$.
In turn, 
\begin{equation}
\label{eqn: kill x der of G}
D_x \G(\cdot,0,0) = 0.
\end{equation}
With these normalizations in hand, define
\[
\hat{\G}(x,y,v) := \G(x,y,v + \H(0,y,0)) \qquad\text{and}\qquad \hat{\u}(x) := \u(x).
\]
Notice that $\H(0,y,0)$ may not be defined at all points $y \in Y$ or even all points $y \in \Up$.
However, it is well-defined in $B_\vep \subset \Up$ for some $\vep > 0$ by the implicit function theorem and (G-Mono).
Furthermore, as $\u$ is twice differentiable at the origin and $\nab\u(0) = 0$, \cite[Theorem~14.25]{V} implies that
\[
\pa^-\u(x) = D^2\u(0)x + o(|x|).
\]
So using \eqref{eqn: Gsub in Gexp}, we can find an $\epsilon > 0$ such that
\begin{equation}
\label{eqn: localization 1}
\pa_\G \u(B_\epsilon) \subset B_\vep \subset \Up.
\end{equation}
Here, $\vep$ and $\epsilon$ are not necessarily equal.
For each $(x,y) \in B_\epsilon \times B_\vep$, let $\hat{V}_{x,y} := V_{x,y} - \H(0,y,0)$.
Define
\[
\hat{\mathfrak{g}} := \{ (x,y,v) : (x,y) \in B_\epsilon \times B_\vep \text{ and } v \in \hat{V}_{x,y} \}.
\]
Observe that $\hat{\G}$ satisfies (G-Twist), (G*-Twist), and (G-Nondeg) on $\hat{\mathfrak{g}}$ (also (G-Mono)).
In $B_\epsilon$, we find that $\hat{\u}$ is $\hat{\G}$-convex.
Indeed, let $x_0 \in B_\epsilon$, $y_0 \in \pa_\G \u(x_0)$, and $\hat{v}_0 := \H(x_0,y_0,\u(x_0)) - \H(0,y_0,0)$.
Then, $(x_0,y_0,\hat{v}_0) \in \hat{\mathfrak{g}}$ and
\[
\hat{\G}(x,y_0,\hat{v}_0) = \G(x,y_0,\H(x_0,y_0,\u(x_0))) \leq \u(x) = \hat{\u}(x) \quad \forall x \in B_\epsilon
\]
with equality at $x = x_0$.
In particular,
\begin{equation}
\label{eqn: subdiff matching}
\pa_{\hat{\G}} \hat{\u}(x) = \pa_\G \u(x) \quad\forall x \in B_\epsilon.
\end{equation}
Setting
\begin{equation}
\label{eqn: hatf and hatg}
\hat{f} := f\1_{B_\epsilon} \qquad\text{and}\qquad \hat{g} := g\1_{\pa_{\hat{\G}} \hat{\u}(B_\epsilon)},
\end{equation}
we claim that 
\begin{equation}
\label{eqn: first pushforward}
(\T_{\hat{\u}})_\# \hat{f} = \hat{g}.
\end{equation}
Note that the dual of $\hat{\G}$ is
\[
\hat{\H}(x,y,u) := \H(x,y,u) - \H(0,y,0).
\]
And so using \eqref{eqn: subdiff matching} and recalling \eqref{eqn: Gsub in Gexp}, we see that $\T_\u|_{B_\epsilon} = \T_{\hat{\u}}$.
Thus, recalling \eqref{eqn: subdiff matching}, it suffices to show that $\T_{\u}^{-1}(\pa_{\G} \u(B_\epsilon)) \setminus B_\epsilon$ has measure zero.
To this end, observe that if $x \in \T_{\u}^{-1}(\pa_{\G} \u(B_\epsilon)) \setminus B_\epsilon$, then there exists an $x_\epsilon \in B_\epsilon$ such that $\pa_\G \u(x) \cap \pa_\G \u(x_\epsilon)$ is non-empty.
Therefore, as $x \neq x_\epsilon$ and recalling \eqref{eqn: Gsubdiff invertible}, we see that
\[
\T_{\u}^{-1}(\pa_{\G} \u(B_\epsilon)) \setminus B_\epsilon \subset \T_{\u}^{-1}(\{ \text{non-differentiability points of } \v\}).
\]
Since 
\[
|\T_{\u}^{-1}(\{ \text{non-differentiability points of }\v\})| = 0,
\] 
because $\v$ is semiconvex and $f$ is bounded away from zero, the claim holds.

By construction, $\hat{\G}$ is such that
\[
\hat{\G}(\cdot,0,0) = \hat{\G}(0,\cdot,0) \equiv 0,
\]
and $\hat{\u}$ is such that
\[
\hat{\u}(0) = \nab \u(0) = 0.
\]
Thus, Taylor expanding $\hat{\G}$ and $\hat{\u}$ around the origin yields
\[
\begin{split}
\hat{\G}(x,y,v) =  D_v &\hat{\G}(0,0,0)v + D_{xy} \hat{\G}(0,0,0)x \cdot y \\
&+ D_{xv} \hat{\G}(0,0,0)\cdot vx + D_{yv} \hat{\G}(0,0,0)\cdot vy + \frac{1}{2} D^2_v \hat{\G}(0,0,0)v^2 + O( |x|^{2+\alpha} + |y|^{2+\alpha} + |v|^{2+\alpha})
\end{split}
\]
and
\[
\hat{\u}(x) = \frac{1}{2} D^2 \hat{\u}(0) x \cdot x + o(|x|^2).
\]
Let 
\[
a := - D_v \hat{\G}(0,0,0) > 0,\qquad M := D_{xy}\hat{\G}(0,0,0),\qquad\text{and}\qquad P := D^2\hat{\u}(0).
\]
As $M = \E(0,0,0)$, with $\E$ defined in \eqref{def: matrix E}, $\det(M) \neq 0$.
Hence, using \eqref{eqn: pointwise jacobian eqn} and \eqref{eqn: kill x der of G}, we find that $\det(P) = \det(D^2\u(0)) > 0$; that is, $P$ is positive definite and symmetric.
Therefore, after the change of coordinates 
\[
(x,y,v) \mapsto (\tilde{x}, \tilde{y}, \tilde{v}) := (P^{1/2}x, P^{-1/2}M^t y, av),
\] 
we see that
\begin{equation}
\label{eqn: Gtalyor after localization and cov}
\tilde{\G}(\tilde{x}, \tilde{y}, \tilde{v}) := \hat{\G}(x,y,v) =  - \tilde{v} + \tilde{x} \cdot \tilde{y} + b_1 \cdot \tilde{v}\tilde{x} + b_2 \cdot \tilde{v}\tilde{y} +  c_3 \tilde{v}^2 + O(|\tilde{x}|^{2+\alpha} + |\tilde{y}|^{2+\alpha} + |\tilde{v}|^{2+\alpha}),
\end{equation}
with
\[
b_1 := \frac{1}{a}D_{xv} \hat{\G}(0,0,0)P^{-1/2}, \quad b_2 := \frac{1}{a}D_{yv} \hat{\G}(0,0,0)[M^t]^{-1}P^{1/2}, \quad\text{and}\quad c_3 := \frac{1}{2a^2} D^2_v \hat{\G}(0,0,0).
\]
Also,
\begin{equation}
\label{eqn: utalyor after localization and cov}
\tilde{\u}(\tilde{x}) :=  \hat{\u}(x) =  \frac{1}{2}|\tilde{x}|^2 + o(|\tilde{x}|^2)
\end{equation}
and is $\tilde{\G}$-convex in $P^{1/2}B_\epsilon$.
In particular,
\[
\pa_{\tilde{\G}} \tilde{\u}(\tilde{x}) = P^{-1/2}M^t\pa_{\hat{\G}} \hat{\u}(x) \qquad\text{with}\qquad  x = P^{-1/2}\tilde{x}.
\]
Now admissibility is with respect to 
\[
\tilde{\mathfrak{g}} := \{ (\tilde{x}, \tilde{y}, \tilde{v}) : (\tilde{x}, \tilde{y}) \in P^{1/2}B_\epsilon \times P^{-1/2}M^tB_\vep \text{ and } \tilde{v} \in a\hat{V}_{P^{-1/2}\tilde{x},[M^t]^{-1}P^{1/2}\tilde{y}} \}.
\]

Additionally, letting $\tilde{\H}$ be the dual of $\tilde{\G}$, we see that
\begin{equation}
\label{eqn: Htalyor after localization and cov}
\tilde{\H}(\tilde{x}, \tilde{y}, \tilde{u}) =  - \tilde{u} + \tilde{x} \cdot \tilde{y} - b_1 \cdot \tilde{u}\tilde{x} - b_2 \cdot \tilde{u}\tilde{y} -  c_3 \tilde{u}^2 + O( |\tilde{x}|^{2+\alpha} + |\tilde{y}|^{2+\alpha} + |\tilde{u}|^{2+\alpha}).
\end{equation}
By construction,
\[
\tilde{\H}(\cdot,0,0) = \tilde{\H}(0,\cdot,0) \equiv 0.
\]
Here, $\tilde{u} := u$.
Furthermore, if we set
\[
\tilde{f}(\tilde{x}) := \det(P^{-1/2})\hat{f}(P^{-1/2}\tilde{x}) \qquad\text{and}\qquad \tilde{g}(\tilde{y}) := |\det([M^t]^{-1}P^{1/2})|\hat{g}([M^t]^{-1}P^{1/2} \tilde{y}),
\]
then from \eqref{eqn: hatf and hatg}, \eqref{eqn: pointwise jacobian eqn}, and \eqref{eqn: kill x der of G}, we deduce that
\begin{equation}
\label{eqn: f0 equals g0}
\frac{\tilde{f}(0)}{\tilde{g}(0)} = \frac{\det(P^{-1/2})}{|\det([M^t]^{-1}P^{1/2})|}\frac{f(0)}{g(0)} = \frac{\det(P^{-1})}{|\det(M^{-1})|}  \frac{\det(D^2\u(0))}{|\det(\E(0,0,0))|} = 1.
\end{equation}
Moreover, $(\T_{\tilde{\u}})_\#\tilde{f} = \tilde{g}$ by \eqref{eqn: first pushforward} and construction.

Consider the rescalings\footnote{\ The most basic generating function, coming from the optimal transport problem with quadratic cost, is invariant under parabolically quadratic rescalings, thinking of $v$ as time: set
\[
\G_r(x,y,v) := \frac{\G(rx, ry, r^2v)}{r^{2}};
\] 
if $\G(x,y,v) = x \cdot y - v$, then $\G_r = \G$.
Rescaling in this way suggests that $\hat{\G}$ need not account for $D_{xv}\G(0,0,0)$, $D_{yv}\G(0,0,0)$, or $D^2_v\G(0,0,0)$ being non-zero, a heuristic confirmation of our choice for $\hat{\G}$.}
\[
\G_r(\tilde{x},\tilde{y},\tilde{v}) := \frac{\tilde{\G}(r\tilde{x},r\tilde{y},r^2 \tilde{v})}{r^{2}}, \qquad \H_r(\tilde{x},\tilde{y},\tilde{v}) := \frac{\tilde{\H}(r\tilde{x},r\tilde{y},r^2 \tilde{u})}{r^{2}}, \qquad\text{and}\qquad \u_r(\tilde{x}) := \frac{\tilde{\u}(r\tilde{x})}{r^2}.
\]
Since $\mathfrak{g}$ and $\mathfrak{h}$ are open and $\G$ is of class $C^2_{\rm loc}$, we have that $B_8 \times B_8 \times (-64,64) \subset \mathfrak{g}_r, \mathfrak{h}_r$ for all $r$ sufficiently small.
In addition to the openness of $\mathfrak{g}$, using \eqref{eqn: Gtalyor after localization and cov} and the $C^2$ regularity of $\G$, we can ensure that $B_8 \times (-64,64) \times B_8 \subset \dom \Grexp$ for all $r$ small enough.\footnote{\,By the $C^2$ regularity of $\G$, the openness of $\mathfrak{g}$, and the inverse function theorem (recall (G-Twist)), we find that the family functions $\{ {\mathbf F}_x := (\Gexp_x,\V_x)\}_{x \in \mathcal{O}}$ varies in a $C^1$ fashion (in $x$) in some open set $\mathcal{O}$.
Hence, by continuity and the openness of $\mathfrak{g}$, we can find an open subset of the origin in $\R^{2n+1}$ on which $\Gexp$ is well-defined.
The rescaling $(x,u,p) \mapsto (x/r,u/r^2,p/r)$, therefore, permits this inclusion.}
Here, $\mathfrak{g}_r$ is the set on which $\G_r$ satisfies (G-Twist), (G*-Twist), and (G-Nondeg); and $\H_r$ and $\mathfrak{h}_r$ are as expected.
Also, define
\[
f_r(\tilde{x}) := \frac{\tilde{f}(r\tilde{x})}{\tilde{f}(0)} \qquad\text{and}\qquad g_r(\tilde{y}) := \frac{\tilde{g}(r\tilde{y})}{\tilde{g}(0)}.
\]
Since $f$ and $g$ are bounded away from zero and infinity, \eqref{eqn: f0 equals g0} implies that $\tilde{f}(0) = \tilde{g}(0)$.
Therefore, from the continuity of $f$ and $g$, we deduce that
\begin{equation*}
\|f_r - 1\|_{L^\infty(B_4)} + \|g_r - 1\|_{L^\infty(B_4)} \leq \del
\end{equation*}
with $\del = \del(r) \to 0$ as $r \to 0$.
Using the push-forward condition $(\T_{\tilde{\u}})_\#\tilde{f} = \tilde{g}$, we find that $(\T_{\u_r})_\# f_r = g_r$.
Moreover, from \eqref{eqn: Gtalyor after localization and cov} and \eqref{eqn: Htalyor after localization and cov}, we determine that
\begin{equation}
\label{eqn: rescaled G small}
\|\G_r - \tilde{x}\cdot\tilde{y} + \tilde{v}\|_{C^{2,\alpha}(B_8 \times B_8 \times (-64,64))} + \|\H_r - \tilde{x}\cdot\tilde{y} + \tilde{u}\|_{C^{2,\alpha}(B_8 \times B_8 \times (-64,64))} \leq \del
\end{equation}
where $\del = \del(r) \to 0$ as $r \to 0$.
Furthermore, from \eqref{eqn: utalyor after localization and cov}, we see that
\begin{equation*}
\bigg{\|}\u_r - \frac{1}{2}|\tilde{x}|^2\bigg{\|}_{C^0(B_4)} \leq \eta
\end{equation*}
where $\eta = \eta(r) \to 0$ as $r \to 0$.

As we proved \eqref{eqn: localization 1},
we deduce that
\begin{equation}
\label{eqn: rescaled subdiff small}
\pa^-\u_r(\tilde{x}) \subset B_\varrho(\tilde{x}) \quad\forall \tilde{x} \in B_3
\end{equation}
where $\varrho = \varrho(r) \to 0$ as $r \to 0$.
Additionally, from \eqref{eqn: rescaled G small}, we find that 
\[
\|\Grexp_{\tilde{x},\tilde{u}} (\tilde{p}) - \tilde{p}\|_{C^1(B_8 \times (-64,64) \times B_8)} \leq \del.
\]
Therefore, using \eqref{eqn: Gsub in Gexp} and \eqref{eqn: rescaled subdiff small}, we obtain that
\begin{equation}
\label{eqn: G-r sub in ball}
\pa_{\G_r}\u_r(\tilde{x}) \subset B_{\rho}(\tilde{x}) \quad \forall\tilde{x} \in B_2
\end{equation}
where $\rho = \rho(r) \to 0$  as $r \to 0$.
Since $\hat{\u}_{\hat{\G}}$ is twice differentiable at $0$, the $\G_r$-transform of $\u_r$ is also twice differentiable at $0$; let $\v_r$ be the $\G_r$-transform of $\u_r$.
In addition, $D^2\v_r(0) = \Id$\footnote{\ We can see this by differentiating the equations 
\begin{align*}
\nab \u_r(\tilde{x}) = D_{\tilde{x}}\G_r(\tilde{x},\T_{\u_r}(\tilde{x}),\H_r(\tilde{x},\T_{\u_r}(\tilde{x}),\u_r(\tilde{x}))) \qquad\text{and}\qquad
\nab \v_r(\tilde{y}) = D_{\tilde{y}}\H_r(\S_{\v_r}(\tilde{y}),\tilde{y},\G_r(\S_{\v_r}(\tilde{y}),\tilde{y},\v_r(\tilde{y})))
\end{align*}
at $0$ and using \eqref{eqn: Gtalyor after localization and cov}, \eqref{eqn: Htalyor after localization and cov}, and that $[\nab \T_{\u_r}(0)]^{-1} = \nab \S_{\v_r}(0)$.}.
So arguing as we did to prove \eqref{eqn: G-r sub in ball}, we find that
\begin{equation}
\label{eqn: H-r sub in ball}
\pa_{\H_r}\v_r(\tilde{y}) \subset B_{\rho^*}(\tilde{y}) \quad \forall\tilde{y} \in B_2
\end{equation}
where $\rho^* = \rho^*(r) \to 0$  as $r \to 0$.
Hence, if
\begin{equation*}
\C := \overline{B}_1 \qquad\text{and}\qquad  \K := \pa_{\G_r}\u_r(\overline{B}_1)
\end{equation*}
and $r$ is sufficiently small, then we can force 
\begin{equation}
\label{eqn: C2 inclusion}
B_{1/2} \subset \K \subset B_2
\end{equation}
thanks to \eqref{eqn: G-r sub in ball}, \eqref{eqn: H-r sub in ball}, and duality.
Observe that $\C$ is convex by construction and $\K$ is closed being the $\G_r$-subdifferential of a compact set and recalling \eqref{eqn: C2 inclusion}, \eqref{eqn: rescaled G small}, and the inclusion $B_8 \times B_8 \times (-64,64) \subset \mathfrak{g}_r$.
Finally, recalling \eqref{eqn: f0 equals g0} and arguing as we did to prove \eqref{eqn: first pushforward}, we have that
\[
(\T_{\u_r})_\# (f_r\1_{\C}) = g_r\1_{\K}.
\]

The remainder of the proof of Theorem~\ref{thm: main} is identical to the optimal transport case after replacing \cite[Theorem~4.3]{DF}, \cite[Theorem~5.3]{DF}, and \cite[Corollary~4.6]{DF} by Theorem~\ref{thm: close implies C1b}, Theorem~\ref{thm: hr}, and Corollary~\ref{cor: Tu is locally open} respectively.
We refer the reader to \cite{DF} rather than including the details.


\section{$C^{1,\beta}$-regularity and Strict $\G$-convexity}
In this section, we prove an $\vep$-regularity result and exhibit some of its consequences.
Before stating it, let us introduce some notation.
Set
\[
\B_R := B_{2R} \times B_{2R} \times (-R^3,R^3) \subset \R^{2n+1}. 
\]
Furthermore, we say a set $E$ is $C$-semiconvex if every point on the boundary of $E$ can be touched from outside by a ball of radius $1/C$.
 
\begin{theorem}
\label{thm: close implies C1b}
Let $\C$ be a closed, $(C_2\delta)$-semiconvex set and $\K$ be a closed set such that
\begin{equation}
\label{eqn: thm bounds on Ci}
B_{1/2} \subset \C, \K \subset B_2,
\end{equation}
$f$ and $g$ be two densities supported on $\C$ and $\K$ respectively, and $\u$ be a $\G$-convex function such that $\pa_\G \u(\C) \subset B_2$ and $(\T_\u)_\# f = g$.
In addition, suppose that $\B_4\subset \mathfrak{g}, \mathfrak{h}$.
For every $\beta \in (0,1)$, there exist constants $\del_0, \eta_0 \in (0,1)$ such that the follow holds: if
\begin{equation}
\label{eqn: thm densities close to constant}
\|f - \1_\C\|_{L^\infty(B_4)} + \|g - \1_\K\|_{L^\infty(B_4)} \leq \del_0,
\end{equation}
\begin{equation}
\label{eqn: thm G close to linear}
\|\G - x \cdot y + v\|_{C^{2,\alpha}(\B_4)} + \|\H - x \cdot y + u\|_{C^{2,\alpha}(\B_4)} \leq \del_0,
\end{equation}
and
\begin{equation}
\label{eqn: thm u close to parabola}
\bigg{\|}\u - \frac{1}{2}|x|^2\bigg{\|}_{C^0(B_4)} \leq \eta_0,
\end{equation}
then $\u \in C^{1,\beta}(B_{1/6})$.
\end{theorem}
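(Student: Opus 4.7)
The plan is to prove this $\vep$-regularity statement by combining a qualitative compactness argument with a quantitative improvement-of-flatness iteration, following the strategy of \cite[Theorem~4.3]{DF} with the modifications needed to handle the $v$-dependence of $\G$. The hypotheses \eqref{eqn: thm densities close to constant}--\eqref{eqn: thm u close to parabola} say precisely that the problem is a $\mathscr{C}^2$-small perturbation of the Brenier problem $\det D^2 \u = 1$ on $B_1$ with target $B_1$, whose unique normalized solution is $\u(x) = |x|^2/2$. One therefore expects to inherit the $C^{1,\alpha}$-regularity that Caffarelli's theory provides in that model case.

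First I would establish a qualitative stability statement by contradiction. If the theorem failed, there would be a sequence $(\G_k,\H_k,f_k,g_k,\C_k,\K_k,\u_k)$ satisfying the hypotheses with $\del_0,\eta_0 \leq 1/k$ but with $\u_k \notin C^{1,\beta}(B_{1/6})$ uniformly. Condition \eqref{eqn: thm G close to linear} yields uniform semiconvexity of the $\u_k$, so after passing to a subsequence $\u_k \to \u_\infty$ in $C^0(B_2)$ and $\nab \u_k \to \nab \u_\infty$ almost everywhere. The uniform inclusion $\B_4 \subset \mathfrak{g}_k,\mathfrak{h}_k$ makes the $\G_k$-supports uniformly admissible, so one can pass to the limit in \eqref{eqn: pointwise jacobian eqn} and in the pushforward $(\T_{\u_k})_\# f_k = g_k$ to deduce that $\nab \u_\infty$ pushes $\1_{B_1}$ onto $\1_{B_1}$. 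Combined with \eqref{eqn: thm u close to parabola}, Caffarelli's theorem forces $\u_\infty(x) = |x|^2/2$, and his $C^{1,\alpha}$-estimates yield a uniform modulus on slightly smaller balls, contradicting the assumed failure along the sequence.

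To upgrade this qualitative compactness to the desired $C^{1,\beta}$-bound, I would iterate a quantitative improvement-of-flatness at each point $x_0 \in B_{1/6}$. At scale $\lambda \in (0,1)$, one subtracts the $\G$-support of $\u$ at $x_0$, performs the coordinate change used in the proof of Theorem~\ref{thm: main} -- in particular, replacing $\G(x,y,v)$ by $\G(x,y,v+\H(x_0,y,0))$ so as to restore the normalization $\G(\cdot,0,0) \equiv \G(0,\cdot,0) \equiv 0$ of the rescaled generator -- and rescales by $\u_\lambda(\tilde{x}) := \lambda^{-2} \u(x_0 + \lambda \tilde{x})$. The qualitative stability, applied at scale $\lambda$ chosen small in terms of $\beta$, ensures that the new configuration satisfies the hypotheses with $\eta_0$ replaced by $\lambda^{2\beta}\eta_0$. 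Iterating on dyadic scales yields $|\u(x) - L_{x_0}(x)| \leq C|x - x_0|^{2+\beta}$ for an affine $L_{x_0}$, which by Campanato's criterion gives $\u \in C^{1,\beta}(B_{1/6})$. The main obstacle I expect is precisely this rescaling step: one must simultaneously renormalize $\G$, $\u$, $f$, $g$, and the admissible domain $\mathfrak{g}$ so that (G-Twist), (G*-Twist), (G-Nondeg), and the inclusion $\B_4 \subset \mathfrak{g}_r$ are preserved at every scale, keeping track of the $v$-direction admissibility (cf.\ the Remark in Section~\ref{sec: prelim Gconvex}) that is automatic in optimal transport but not here.
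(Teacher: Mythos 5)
Your high-level strategy---approximate by the model problem via compactness and then iterate a rescaling at each point---is the same as the paper's (compare Lemma~\ref{lem: u is close to ot soln} and Steps 1--4 of Proposition~\ref{prop: C1b ptwise}), and you are right to flag the $v$-admissibility bookkeeping as a genuine difficulty. However, there are two gaps. First, your opening compactness paragraph does not actually close the contradiction: $\u_k \to \u_\infty = |x|^2/2$ uniformly plus smoothness of the limit gives no $C^{1,\beta}$ modulus on the $\u_k$ themselves, so ``$\u_k \notin C^{1,\beta}(B_{1/6})$ uniformly'' is never contradicted. Compactness only supplies a $C^0$ approximation $\w$ of $\u$ satisfying $\det D^2\w = 1$; the quantitative estimate must come from the iteration, and the compactness step is an input to it, not a self-contained proof.

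Second, the iteration you describe cannot yield the decay $\eta_0 \mapsto \lambda^{2\beta}\eta_0$ as stated, because the rescaling $\u_\lambda(\tilde x) = \lambda^{-2}\u(x_0+\lambda\tilde x)$ is isotropic. If $\u$ is $\eta$-close to $\tfrac12 Mx\cdot x$ for some $M\ne\Id$, the rescaled function is still $\eta$-close to $\tfrac12 M\tilde x\cdot\tilde x$, with no improvement toward $\tfrac12|\tilde x|^2$. What produces the improvement is a non-isotropic affine normalization $A = [D^2\w(0)]^{-1/2}$ built from the Monge--Amp\`ere approximator $\w$ (Step 2 of Proposition~\ref{prop: C1b ptwise}), whose $C^3$ interior regularity turns the cubic remainder into a genuine gain at scale $h_0^{1/2}$. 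You invoke instead ``the coordinate change used in the proof of Theorem~\ref{thm: main},'' but that change of variables uses $P=D^2\u(x_0)$, which exists only at points of twice-differentiability---and $x_0\in B_{1/6}$ is arbitrary here, so $D^2\u(x_0)$ may not exist. Moreover the paper's iteration does not in fact improve the flatness parameter: it keeps $\eta$ fixed and instead controls the eccentricity of the products $M_k = A_1\cdots A_k$ (equations \eqref{eqn: iteration matrices}--\eqref{eqn: C1b}), extracting $C^{1,\beta}$ from the section-decay estimate $\|\u_1\|_{C^0(B_{r_0^k})}\le h_0^k$. Both routes can be made to work, but yours as written is missing the essential normalization matrix and the lemma (here Lemma~\ref{lem: Gsub of u lives in an nhood of sub of ot soln}) comparing $\pa_\G\u$ with $\nabla\w$ that is needed to show the rescaled configuration still meets the hypotheses.
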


Theorem~\ref{thm: close implies C1b} will follow from its pointwise version Proposition~\ref{prop: C1b ptwise}.

\begin{proposition}
\label{prop: C1b ptwise}
Let $\C$ be a closed, $(C_3\delta)$-semiconvex set and $\K$ be a closed set such that
\begin{equation*}
B_{1/3} \subset \C, \K \subset B_3,
\end{equation*}
$f$ and $g$ be two densities supported on $\C$ and $\K$ respectively, and $\u$ be a $\G$-convex function such that $\u(0) = 0$, $\pa_{\G} \u(\C) \subset B_3$, and $(\T_{\u})_\# f = g$.
In addition, suppose that $\B_3 \subset \mathfrak{g}, \mathfrak{h}$,
\begin{equation}
\label{eqn: extra 1}
\G(\cdot,0,0) = \G(0,\cdot,0) = \H(\cdot,0,0) = \H(0,\cdot,0) \equiv 0,
\end{equation}
and
\begin{equation}
\label{eqn: extra 2}
D_{v}\G(0,0,0) = D_{u}\H(0,0,0) = -1
\qquad\text{and}\qquad
D_{xy} \G(0,0,0) = D_{xy} \H(0,0,0) = \Id.
\end{equation}
For every $\beta \in (0,1)$, there exist constants $\del, \eta \in (0,1)$ such that the follow holds: if
\begin{align}
\label{eqn: prop densities close to constant}
\|f - 1\|_{L^\infty(\C)} &+ \|g - 1\|_{L^\infty(\K)} \leq \del,
\\
\label{eqn: Gbar small}
\|\G - x \cdot y + v\|_{C^{2,\alpha}(\B_3)} &+ \|\H - x \cdot y + u\|_{C^{2,\alpha}(\B_3)}\leq \del,
\end{align}
and
\begin{equation}
\label{eqn: prop u close to parabola}
\bigg{\|}\u - \frac{1}{2}|x|^2\bigg{\|}_{C^0(B_3)} \leq \eta,
\end{equation}
then $\u \in C^{1,\beta}(0)$.
\end{proposition}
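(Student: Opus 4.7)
The plan is to execute the iteration/compactness scheme of Caffarelli and De Philippis--Figalli \cite{DF}: establish a one-step improvement of closeness at a fixed small scale $\l\in(0,1)$ and iterate using the parabolic rescaling already introduced in Section~\ref{sec: proof of main thm}. Once the iteration produces a geometric decay of the form
\[
\bigg\|\u_{\l^k} - \tfrac{1}{2}|\tilde{x}|^2\bigg\|_{C^0(B_3)} \leq \l^{k\gamma}\eta \qquad \forall\, k\geq 0,
\]
where $\u_r(\tilde{x}) := r^{-2}\u(r\tilde{x})$ and $\gamma = \gamma(\beta)>0$, a standard Campanato-type argument yields pointwise $C^{1,\beta}$-regularity of $\u$ at the origin.

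The key ingredient is an improvement lemma: for any $\l\in(0,1)$ there exist $\del_\l,\eta_\l>0$ such that if the hypotheses of the proposition hold with $\del=\del_\l$ and $\eta=\eta_\l$, then at scale $\l$ the rescaled data again satisfy the hypotheses of the proposition, with the closeness parameter $\eta$ improved by a factor of $\l^{\gamma}$. The normalizations \eqref{eqn: extra 1}--\eqref{eqn: extra 2} are preserved exactly by the parabolic rescaling; the $\mathscr{C}^2$-smallness in \eqref{eqn: Gbar small} improves by a factor of $\l$; and the semiconvexity constant of $\C$ gets smaller under $\C \mapsto \l^{-1}\C$. Hence the iteration is self-sustaining, and the desired decay follows.

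The improvement lemma itself is proved by contradiction and compactness, following the renormalization strategy of \cite{DF}. Suppose it fails along a sequence $(\G_k,\H_k,\u_k,f_k,g_k,\C_k,\K_k)$ with $\del_k,\eta_k\to 0$. By \eqref{eqn: Gbar small} and the facts recalled in Section~\ref{sec: prelim Gconvex}, the $\u_k$ are uniformly Lipschitz and semiconvex on $B_3$, so Arzel\`{a}--Ascoli produces a uniform subsequential limit $\u_\infty$ on compact subsets, and \eqref{eqn: prop u close to parabola} with $\eta_k \to 0$ forces $\u_\infty \equiv \tfrac12|x|^2$. Setting $w_k := (\u_k - \tfrac12|x|^2)/\eta_k$, one then argues that the renormalized excesses are locally H\"older equicontinuous and converge, along a further subsequence, to a limit $w_\infty$ which solves, in the viscosity sense, the linear uniformly elliptic equation obtained by linearizing the Monge--Amp\`ere-type equation \eqref{eqn: pointwise jacobian eqn} around $\tfrac12|x|^2$. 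Classical Schauder theory for $w_\infty$ at the origin then contradicts the assumed quantitative failure of improvement. Passing the push-forward identity $(\T_{\u_k})_\# f_k = g_k$ to the limit uses a.e.\ convergence of $\nab \u_k$ (from semiconvexity and a.e.\ twice-differentiability), together with \eqref{eqn: prop densities close to constant} and the $(C_3\del_k)$-semiconvexity of $\C_k,\K_k$, which forces the limit domains to be convex and compatible with the target $\u_\infty \equiv \tfrac12 |x|^2$.

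The delicate new point, in contrast with the optimal-transport argument of \cite{DF}, is that the $\G$-subdifferential depends on the value $\u(x)$ itself through the admissibility condition \eqref{eqn: gsub admiss}, and $\G$ is genuinely nonlinear in its third variable. Along the sequence one must ensure that $\H_k(x,y,\u_k(x))$ stays safely inside the region where $\G_k$ satisfies the structure conditions of Section~\ref{sec: structure}, so that $\pa_{\G_k}\u_k$ behaves continuously as $k\to\infty$ and the rescaled transport maps converge to the identity; this is handled by the hypothesized uniform inclusion $\B_3 \subset \mathfrak{g},\mathfrak{h}$, combined with the uniform bound on $\u_k$ from \eqref{eqn: prop u close to parabola} and the $\mathscr{C}^2$-closeness of $\H_k$ to $x\cdot y - u$ from \eqref{eqn: Gbar small}. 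This extra care in controlling the $v$-dependence of the generator is, I expect, the chief new obstacle compared to \cite{DF}; once it is in place, the remainder of the argument is a direct adaptation of the $\vep$-regularity scheme developed there.
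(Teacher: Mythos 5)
Your proposal takes a linearization route: renormalize the excess $w_k := (\u_k - \tfrac12|x|^2)/\eta_k$, pass to a limit $w_\infty$ solving a linear elliptic equation, and apply Schauder. This is genuinely different from the paper's argument, which never linearizes: instead it compares $\u$ to a strictly convex solution $\w$ of $\det D^2\w = 1$ via a compactness lemma (Lemma~\ref{lem: u is close to ot soln}), exploits the interior smoothness of $\w$ (Pogorelov plus Schauder, \eqref{eqn: pogo plus shauder}) to show that the $\G$-sections $S_\G(\u,h)$ and their images $\pa_\G\u(S_\G(\u,h))$ are close to a pair of dual ellipsoids of controlled eccentricity (Lemma~\ref{lem: Gsub of u lives in an nhood of sub of ot soln} together with Step~2), and then iterates after an affine normalization by the matrices $A_k$.

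The gap is your claim that the renormalized excesses $w_k$ are locally H\"older equicontinuous. This is not automatic, and it is in fact the heart of the matter. The uniform Lipschitz bound and semiconvexity of $\u_k$ give one-sided Hessian bounds on $\u_k$, but dividing by $\eta_k\to0$ destroys any two-sided oscillation control: nothing in the hypotheses \eqref{eqn: prop densities close to constant}--\eqref{eqn: prop u close to parabola} rules out, a priori, that $\u_k - \tfrac12|x|^2$ oscillates by $O(\eta_k)$ on scales of size $\eta_k^{1/2}$, in which case $w_k$ has no uniform modulus of continuity. The oscillation decay you would need in order to pass to a viscosity limit and invoke Schauder is precisely what the section-geometry analysis of Steps~1--2 produces (via the smooth comparison function $\w$, the engulfing of sections by ellipsoids, and the subdifferential inclusion of Lemma~\ref{lem: Gsub of u lives in an nhood of sub of ot soln}). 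In short, the regularity you would use as an input to the linearization is essentially the output of the argument you are trying to sidestep; without supplying a separate Harnack-/Krylov--Safonov-type estimate for the renormalized quantity, the compactness step does not close.

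A secondary, less serious point: the claim that the $\mathscr{C}^2$-smallness in \eqref{eqn: Gbar small} "improves by a factor of $\l$" under parabolic rescaling is an overclaim. Under $\G_r(\tilde x,\tilde y,\tilde v) := r^{-2}\G(r\tilde x,r\tilde y,r^2\tilde v)$ one has $D^2_{\tilde x\tilde y}\G_r = D^2_{xy}\G$ and $D^2_{\tilde x}\G_r = D^2_x\G$ evaluated at rescaled points, so with \eqref{eqn: extra 1}--\eqref{eqn: extra 2} the deviation is \emph{preserved} (and decays qualitatively by continuity of the second derivatives as the domain shrinks), but it does not decay by a fixed geometric factor unless $\G\in C^{2,\alpha}$, which is only assumed in the higher-regularity Proposition~\ref{prop: hr ptwise}. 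For the iteration in the present proposition preservation suffices, so this does not break the scheme, but the mechanism is different from what you state.
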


The proof of Proposition~\ref{prop: C1b ptwise} makes use of two lemmas.
The first is a compactness result that allows us to approximate $\u$ with a solution to an optimal transport problem with quadratic cost.
The second is an estimate on the $\G$-subdifferential of $\u$ in terms of the gradient map of the convex potential that approximates $\u$ found in the first lemma.

\begin{lemma}
\label{lem: u is close to ot soln}
Let $\C$ be a closed, $(C_R\delta)$-semiconvex set and $\K$ be a closed set such that
\begin{equation}
\label{eqn: bounds on Ci}
B_{1/R} \subset \C, \K \subset B_R
\end{equation}
for some $R \geq 3$, $f$ and $g$ be two densities supported on $\C$ and $\K$ respectively, and $\u : B_R \to (-R^2,R^2)$ be a $\G$-convex function such that $\pa_\G \u(\C) \subset B_R$ and $(\T_\u)_\# f = g$.
In addition, suppose that $\B_{R} \times \subset \mathfrak{g}$.
Also, let $\rho > 0$ be such that $|\C| = |\rho \K|$ and $\w$ be a convex function such that $(\nab \w)_\# \1_{\C} = \1_{\rho \K}$ with $\w(0) = \u(0)$.
Then, there exists an increasing function $\om : \R^+ \to \R^+$, depending only on $R$, satisfying $\om(\del) \geq \del$ and $\om(0^+) = 0$ such that if
\begin{equation}
\label{eqn: densities close to constant}
\|f - 1\|_{L^\infty(\C)} + \|g - 1\|_{L^\infty(\K)} \leq \del
\end{equation}
and
\begin{equation}
\label{eqn: G close to linear}
\|\G - x \cdot y + v\|_{C^2(\B_{R})} + \|\H - x \cdot y + u\|_{C^2(\B_{R})} \leq \del,
\end{equation}
then
\begin{equation}
\label{eqn: u close to ot potential}
\|\u - \w\|_{C^0(B_{1/R})} \leq \om(\del).
\end{equation}
\end{lemma}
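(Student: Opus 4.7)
\textbf{Proof plan for Lemma~\ref{lem: u is close to ot soln}.}
The plan is to argue by compactness and contradiction. Suppose the conclusion fails; then there exist $\vep_0 > 0$ and sequences $\del_k \to 0$ with data $(\G_k,\H_k,f_k,g_k,\C_k,\K_k,\u_k,\w_k)$ satisfying the hypotheses for $\del_k$, yet $\|\u_k - \w_k\|_{C^0(B_{1/R})} > \vep_0$ for all $k$. By Section~\ref{sec: prelim Gconvex}, the $\mathscr{C}^2$-closeness of $\G_k$ to $x\cdot y - v$ yields a uniform bound on $D_x^2 \G_k$, hence a uniform semiconvexity constant for the $\G_k$-convex $\u_k$ on $B_R$; combined with $|\u_k| \leq R^2$, this gives uniform local Lipschitz bounds, and Arzel\`a--Ascoli extracts, up to subsequence, $\u_k \to \u_\infty$ uniformly on compact subsets of $B_R$. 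Hausdorff compactness of closed subsets of $\overline B_R$, combined with the $(C_R\del_k)$-semiconvexity of $\C_k$, then yields (after a further subsequence) $\C_k \to \C_\infty$ and $\K_k \to \K_\infty$ in Hausdorff distance, with $\C_\infty$ convex and $B_{1/R} \subset \C_\infty, \K_\infty \subset \overline B_R$. The limit $\u_\infty$ is convex: at every $x_0 \in B_R$, a $\G_k$-support of $\u_k$ has the form $\G_k(\cdot,y_k,v_k)$ with $(y_k,v_k)$ bounded; any subsequential limit is an affine minorant of $\u_\infty$ touching at $x_0$, since $\G_k \to x\cdot y - v$ in $C^2$.

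The next step is stability of the push-forward condition. From the definition $\T_{\u_k}(x) = \mathbf{G}_k\textnormal{-exp}_{x,\u_k(x)}(\nab \u_k(x))$ and the $\mathscr{C}^2$-smallness of $\G_k - (x\cdot y - v)$, the map $\mathbf{G}_k\textnormal{-exp}$ converges in $C^1$ to the map $p \mapsto p$; combined with the almost-everywhere convergence $\nab \u_k \to \nab \u_\infty$ (coming from uniform convergence of the uniformly semiconvex $\u_k$), this gives $\T_{\u_k} \to \nab \u_\infty$ almost everywhere on $\C_\infty$. Dominated convergence applied to $(\T_{\u_k})_\# f_k = g_k$, together with $f_k \to \1_{\C_\infty}$ and $g_k \to \1_{\K_\infty}$ in $L^1$, then yields $(\nab \u_\infty)_\# \1_{\C_\infty} = \1_{\K_\infty}$; in particular $|\C_\infty| = |\K_\infty|$, so the mass-matching constants $\rho_k \to 1$.

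A parallel, easier argument applied to the convex Brenier potentials $\w_k$ (uniformly Lipschitz since $\nab \w_k(\C_k) \subset \rho_k \K_k$ is bounded, and normalised by $\w_k(0) = \u_k(0)$) yields $\w_k \to \w_\infty$ uniformly on $\C_\infty$, with $(\nab \w_\infty)_\# \1_{\C_\infty} = \1_{\K_\infty}$ and $\w_\infty(0) = \u_\infty(0)$. Brenier's uniqueness theorem then forces $\u_\infty = \w_\infty$ on $\C_\infty \supset B_{1/R}$, contradicting $\vep_0 > 0$; the modulus $\om$ with $\om(\del) \geq \del$ is obtained by monotonising the best quantitative rate extracted from this compactness argument. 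I expect the main technical obstacle to lie in justifying the a.e.\ convergence $\T_{\u_k} \to \nab \u_\infty$ and in passing to the limit in the push-forward condition under merely Hausdorff convergence of the supports $\C_k, \K_k$, especially in view of the possible discontinuity of $\G$-subdifferentials flagged in the Remark following \eqref{eqn: Gsub in Fsub}; once $(\nab \u_\infty)_\# \1_{\C_\infty} = \1_{\K_\infty}$ is secured, the remainder follows from standard Brenier uniqueness.
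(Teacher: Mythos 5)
Your approach is correct in outline and is genuinely different from the paper's in the key step, namely establishing that $\nab\u_\infty$ pushes $\1_{\C_\infty}$ forward to the same limit as $\nab\w_\infty$, so that Brenier uniqueness can be invoked. The paper does \emph{not} try to prove a.e.\ pointwise convergence of $\T_{\u_j}$ directly; instead it passes to the optimal plan $\pi_j := (\Id,\T_{\u_j})_\#f_j$, proves that $\spt\pi_\infty$ is $c$-cyclically monotone for the quadratic cost (using only the $\G_j$-convexity inequality along finite cycles and the $\mathscr{C}^2$-smallness of $\G_j$), identifies $\pi_\infty$ with $(\Id,\nab\u_0)_\#\1_{\C_\infty}$ by \cite[Theorem~5.20]{V}, and then uses convergence in measure plus distributional convergence (\cite[Corollary~5.23]{V}) to show $\nab\u_0 = \nab\u_\infty$. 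Your route --- semiconvexity plus uniform convergence gives $\nab\u_k\to\nab\u_\infty$ on a fixed full-measure set, the $C^1$-closeness of $\Grexp_k$ to the identity upgrades this to $\T_{\u_k}\to\nab\u_\infty$ a.e., and then a dominated-convergence argument passes the push-forward to the limit --- is shorter and avoids the Kantorovich-plan machinery, but it is less robust: it leans directly on the a.e.\ stability of gradients of semiconvex functions, where the plan/cyclical-monotonicity argument the paper uses is the more standard stability tool in optimal transport.

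There is, however, one genuine error in your write-up: you assert that $g_k \to \1_{\K_\infty}$ in $L^1$. This is not justified and in general false. The hypotheses impose $(C_R\delta)$-semiconvexity only on $\C$, not on $\K$; it is precisely this regularity (plus the Beer convergence result \cite{B}) that the paper uses to get $\1_{\C_j}\to\1_{\C_\infty}$ in $L^1$, and there is no analogue for $\K$. Hausdorff convergence of $\K_k$ alone does \emph{not} give $|\K_k\Delta\K_\infty|\to 0$ (the boundary of the Hausdorff limit can be fat). The paper accordingly only asserts that $g_j$ and $\1_{\rho_j\K_j}$ converge weakly-$*$ in $L^\infty$ to a common density $g_\infty$, \emph{not} to $\1_{\K_\infty}$. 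This does not sink your argument --- weak-$*$ convergence of $g_k$ suffices to pass the push-forward to the limit against continuous test functions, yielding $(\nab\u_\infty)_\#\1_{\C_\infty}=g_\infty=(\nab\w_\infty)_\#\1_{\C_\infty}$, and Brenier uniqueness together with the normalization $\u_\infty(0)=\w_\infty(0)$ and the convexity (hence connectedness) of $\C_\infty$ then gives $\u_\infty=\w_\infty$ on $\C_\infty\supset B_{1/R}$ --- but the $L^1$ claim as stated is a gap, and it is exactly the pitfall you flagged as the ``main technical obstacle'' without then resolving it. Two further, more minor points: your derivation of $\rho_k\to 1$ routes through the faulty $L^1$ claim, whereas the cleaner (and the paper's) argument is mass balance, $\int f_k = \int g_k$ together with the $L^\infty$-closeness of $f_k,g_k$ to $1$ directly gives $|\C_k|/|\K_k|\to 1$; and you should say a word about extending $\u_k$ and $\w_k$ from $\C_k$ to all of $B_R$ (as the paper does) before invoking Arzel\`a--Ascoli on the full ball.
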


\begin{proof}
Suppose, to the contrary, that the lemma is false.
Then, there exists an $\vep_0 > 0$ and sequences of closed sets $\C_j$ and $\K_j$ satisfying \eqref{eqn: bounds on Ci} with $\C_j$ being $(C_R/j)$-semiconvex, functions $f_j$ and $g_j$ satisfying \eqref{eqn: densities close to constant} with $\del = 1/j$, and generating functions $\G_j$ satisfying \eqref{eqn: G close to linear} also with $\del = 1/j$ such that
\begin{equation}
\label{eqn: contradications j}
\u_j(0) = \w_j(0) = 0 \qquad\text{and}\qquad \|\u_j - \w_j\|_{C^0(B_{1/R})} \geq \vep_0
\end{equation}
where $\u_j$ and $\w_j$ are as in the statement of the lemma.
Moreover, 
$\B_{R}\subset \mathfrak{g}_j$.

Using the push-forward condition $(\T_{\u_j})_\# f_j = g_j$, \eqref{eqn: bounds on Ci}, and \eqref{eqn: densities close to constant}, we find that
\begin{equation}
\label{eqn: rho to 1}
\rho_j = (|\C_j|/|\K_j|)^{1/n} \to 1
\end{equation}
as $j \to \infty$.
Now let us extend each $\w_j|_{\C_j}$ to a convex function on $B_R$, setting
\[
\w_j(x) := \sup_{z \in \C_j, \, p \in \pa^-\w_j(z)} \w_j(z) + p \cdot (x - z).
\]
From \eqref{eqn: rho to 1}, we see that
\begin{equation*}
\pa^- \w_j(B_R) \subset B_{{\rho_j} R} \subset B_{2R}
\end{equation*}
for $j \gg 1$.
Hence, the family $\w_j$ is uniformly Lipschitz (recall the equality in \eqref{eqn: contradications j}), and so, up to a subsequence, $\w_j$ converges uniformly in $B_R$ to some convex function $\w_\infty$.
Similarly, let us extend $\u_j|_{\C_j}$ to $B_R$:
\[
\u_j(x) := \sup_{z \in \C_j, \, y \in \pa_{\G_j}\u_j(z)} \G_j(x,y,\H_j(z,y,\u_j(z))).
\]
Given $x_0 \in B_R$, let $(z_0,y_0)$ be a pair at which the above supremum is attained.
Then, using \eqref{eqn: G close to linear}, we see that $v_0 := \H_j(z_0,y_0,\u_j(z_0)) \in (-R^3,R^3)$, and it follows that $(x_0,y_0,v_0) \in \mathfrak{g}_j$.
Consequently, these extensions are $\G_j$-convex in $B_R$. 
So from \eqref{eqn: G close to linear}, in particular, since the $C^1$-norms of $\G_j$ are uniformly bounded, and as $\pa_{\G_j} \u_j(B_R) \subset B_R$, taking $j \gg 1$, we determine that the collection $\u_j$ is uniformly $(R+1)$-Lipschitz (again, recall that $\u_j(0) = 0$).
Thus, up to a subsequence, $\u_j$ converges uniformly in $B_R$ to some convex function $\u_\infty$.
Moreover, by \eqref{eqn: contradications j},
\begin{equation}
\label{eqn: contradications infty}
\u_\infty(0) = \w_\infty(0) = 0 \qquad\text{and}\qquad \|\u_\infty - \w_\infty\|_{C^0(B_{1/R})} \geq \vep_0.
\end{equation}
Up to subsequences, the sets $\C_j$ converge in the Hausdorff sense to some 
\[
B_{1/R} \subset \C_\infty \subset B_R.
\]
Also, since each $\C_j$ is $(C_R/j)$-semiconvex, it follows that $\C_\infty$ is convex and
\[
\dist(\pa \C_j, \pa \C_\infty)+\dist(\pa \C_\infty, \pa \C_j) \to 0.
\] 
Then, arguing exactly as is \cite[Theorem]{B}, we find that $|\C_j \Delta \C_\infty| \to 0$, or, equivalently, that $\1_{\C_j}$ converges in $L^1$ to $\1_{\C_\infty}$.
Furthermore, using \eqref{eqn: densities close to constant} and as $\rho_j \to 1$, we see that $f_j$ and $\1_{\rho_j\C_j}$ converge in $L^1$ to $\1_{\C_\infty}$.
In addition, up to subsequences, $g_j$ and $\1_{\rho_j\K_j}$ converge weakly-* in $L^\infty$ to a density $g_\infty$.

By \cite[Theorem~5.20]{V} and the uniqueness of optimal transports, we see that $\nab \w_\infty$ is the optimal transport for the quadratic cost $-x \cdot y$ taking $\1_{\C_\infty}$ to $g_\infty$.
If $\nab \u_\infty = \nab \w_\infty$ almost everywhere in $\C_\infty$, then the equality in \eqref{eqn: contradications infty} implies that $\u_\infty = \w_\infty$, contradicting the inequality in \eqref{eqn: contradications infty}.
It then follows that there exists an increasing function $\om_R : \R^+ \to \R^+$, depending only on $R$, such that $\om_R(0^+) = 0$ and \eqref{eqn: u close to ot potential} holds.
Taking $\om(\del) := \max\{ \om_R(\del),\del\}$ concludes the proof.

Define $\pi_j := (\Id,\T_{\u_j})_\#f_j$.
By construction, this family of measures is tight and
\[
\spt \pi_j \subset \bigcup_{x \in \C_j} \{ (x,y) : y \in \pa_{\G_j} \u_j(x) \}.
\]
So $\pi_j$ converges weakly to some measure $\pi_\infty$ whose marginals are $\1_{\C_\infty}$ and $g_\infty$.
Furthermore, for any $\{ (x_k.y_k) \}_{k = 1}^N \subset \spt \pi_\infty$, there exist sequences $\{(x_k^j,y_k^j)\}_{k = 1}^N \subset \spt \pi_j$ such that $(x_k^j,y_k^j) \to (x_k,y_k)$ for each $k = 1, \dots, N$ and
\[
\sum_{k=1}^N \u_j(x_{k+1}^j) \geq \sum \G_j(x_{k+1}^j,y_k^j,\H_j(x_k^j,y_k^j,\u_j(x_k^j)))
\]
with $(x_{N+1}^j,y_{N+1}^j) = (x_1^j,y_1^j)$.
This is just the $\G_j$-convexity of $\u_j$.
Recalling \eqref{eqn: G close to linear} and that $\u_j$ converges uniformly to $\u_\infty$, taking the limit as $j \to \infty$, we deduce that
\[
0 \geq \sum_{k=1}^N (x_{k+1}-x_k) \cdot y_k.
\]
In other words, the support of $\pi_\infty$ is $c$-cyclically monotone for the quadratic cost.
Therefore,
\[
\pi_\infty = (\Id,\nab \u_0)_\#\1_{\C_\infty},
\]
and $\nab \u_0$ is the optimal transport for the quadratic cost $-x \cdot y$ taking $\1_{\C_\infty}$ to $g_\infty$.
In particular, $\nab \u_0 = \nab \w_\infty$ almost everywhere in $\C_\infty$.
Using that $f_j$ converges in $L^1$ to $\1_{\C_\infty}$ and arguing as in the proof of \cite[Corollary~5.23]{V}, we see that $\T_{\u_j}$ converges to $\nab \u_0$ in measure in $B_R$ and as distributions, up to a further subsequence.
By \eqref{eqn: G close to linear} and since $\u_j$ converges to $\u_\infty$ uniformly, we have that $\T_{\u_j}$ also converges to $\nab \u_\infty$ as distributions in $B_R$.
Hence, by the local integrability of $\nab \u_0$ and $\nab \u_\infty$, we determine that $\nab \u_\infty = \nab \u_0$ almost everywhere in $\C_\infty$, as desired.
\end{proof}

\begin{remark}[A remark on the regularity of $\C$ and the proof of Lemma~\ref{lem: u is close to ot soln}]
In the optimal transportation setting, De Philippis and Figalli appeal to the strong stability results available for solutions, thanks to the Kantorovich formulation of the problem, to show that $\nab \u_\infty = \nab \w_\infty$ almost everywhere.
Here, however, the set $\C$ has to have some regularity to deduce $L^1$ convergence of the contradiction sequence's source densities and, in turn, prove the same equality.
An inspection of the proof of \cite[Theorem]{B} shows that $|\C_j \Delta \C_\infty|$ goes to zero, i.e., $\1_{\C_j}$ converges in $L^1$ to $\1_{\C_\infty}$, provided that the boundaries $\pa \C_j$ converge uniformly to the boundary $\pa \C_\infty$ and have ($n$-dimensional Lebesgue) measure zero.
Therefore, Lemma~\ref{lem: u is close to ot soln} can be applied, by Arzel\`a-Ascoli, if $\C$ is a Lipschitz set whose boundary's Lipschitz constant depends only on $R$, for example.
So the regularity assumption on $\C$ in Theorem~\ref{thm: close implies C1b} and the following lemmas, propositions, and theorems, that $\C$ is ($C\delta$)-semiconvex, can be weakened.
Indeed, in the course of the proof of Proposition~\ref{prop: C1b ptwise}, every application of Lemma~\ref{lem: u is close to ot soln} after the first will be to the $(C_3\delta)$-semiconvex sets $\{ \u_k \leq 1 \}$.
With respect to our main theorem, $\C = \overline{B}_1 - x_0$, which is as nice as imaginable.
\end{remark}

From this point forward, let $\N_r(E)$ denote the $r$-neighborhood of a set $E$.

\begin{lemma}
\label{lem: Gsub of u lives in an nhood of sub of ot soln}
Let $R \geq 3$, $\u : B_{1/R} \to (-R,R)$ be a $\G$-convex function such that $\pa_\G \u(B_{1/R}) \subset B_R$, and $\w \in C^1(B_{1/R})$ be convex.
Suppose that $\B_{R} \subset \mathfrak{g}$.
Fix $A \in \R^{n \times n}$ to be a symmetric matrix such that
\begin{equation}
\label{eqn: matrix is uni elliptic}
\frac{1}{K}\Id \leq A \leq K \Id
\end{equation}
for some $K \geq 1$. 
Define the ellipsoid
\[
\El(x_0,h) := \bigg{\{} x : \frac{1}{2}A(x-x_0) \cdot (x-x_0) \leq h \bigg{\}},
\]
and assume that $\El(x_0,h) \subset B_{1/R}$.
If
\[
\|\u - \w\|_{C^0(\El(x_0,h))} \leq \vep
\]
and 
\[
\|\G - x \cdot y + v\|_{C^2(\B_{R})} + \|\H - x \cdot y + u\|_{C^2(\B_{R})} \leq \del
\]
for small constants $\vep, \del > 0$, then
\[
\pa_\G \u(\El(x_0, h - \vep^{1/2})) \subset \N_{\del + K'(h\vep)^{1/2}} (\nab \w(\El(x_0, h))) \quad \forall 0 < \vep < h^2 \ll 1
\]
where $K' = K'(K) > 0$. 
\end{lemma}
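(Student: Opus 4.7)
The plan is to transfer the information $\bar{y} \in \pa_\G \u(\bar{x})$ into a usual-gradient statement about $\w$ on $\El(x_0, h)$, via the convex auxiliary function
\[
\phi(x) := \w(x) - (\bar{y} + a) \cdot x, \qquad a := D_x\G(\bar{x}, \bar{y}, \bar{v}) - \bar{y},
\]
where $\bar{v} := \H(\bar{x}, \bar{y}, \u(\bar{x}))$. The bound $\|\G - (x \cdot y - v)\|_{\mathscr{C}^2(\B_R)} \le \del$ immediately forces $|a| \le \del$ and $\|D^2_x \G(\cdot, \bar{y}, \bar{v})\|_{C^0} \le \del$.

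First I would combine the $\G$-support inequality $\u(\cdot) \ge \G(\cdot, \bar{y}, \bar{v})$ (with equality at $\bar{x}$), a second-order Taylor expansion of $\G(\cdot, \bar{y}, \bar{v})$ at $\bar{x}$, and the hypothesis $\|\u - \w\|_{C^0(\El(x_0, h))} \le \vep$, which together produce on $\El(x_0, h)$ the near-minimum estimate
\[
\phi(x) - \phi(\bar{x}) \ge -C\del|x - \bar{x}|^2 - 2\vep
\]
for a universal $C$. Since $A$ is comparable to $\Id$ up to the constant $K$ and $\bar{x} \in \El(x_0, h - \vep^{1/2})$, letting $\psi(x) := \tfrac{1}{2}A(x - x_0) \cdot (x - x_0)$ and using $|\nab \psi| \le K\sqrt{2Kh}$ on $\El(x_0, h)$ provides a Euclidean ball $B_\rho(\bar{x}) \subset \El(x_0, h)$ of radius $\rho \ge c_K \sqrt{\vep/h}$. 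Let $x'$ minimize $\phi$ over $\overline{B_\rho(\bar{x})}$: if $x'$ is interior then $\nab \phi(x') = 0$, while if $x' \in \pa B_\rho(\bar{x})$ the first-order optimality condition forces $\nab \phi(x') = \lambda(\bar{x} - x')/\rho$ for some $\lambda \ge 0$, and convexity of $\phi$ gives
\[
\phi(\bar{x}) - \phi(x') \ge \nab \phi(x') \cdot (\bar{x} - x') = \lambda \rho,
\]
so in either case $|\nab \phi(x')| \le (\phi(\bar{x}) - \phi(x'))/\rho \le (C\del\rho^2 + 2\vep)/\rho$.

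Substituting $\rho = c_K\sqrt{\vep/h}$ and using $\vep < h^2$ together with $\del$ small compared to $h$ (so that the $C\del\rho^2$ contribution is dominated by $2\vep$) gives $|\nab \phi(x')| \le K'\sqrt{h\vep}$ for some $K' = K'(K)$. Because $\nab \phi(x') = \nab \w(x') - (\bar{y} + a)$ and $|a| \le \del$, the triangle inequality then yields
\[
|\nab \w(x') - \bar{y}| \le |\nab \phi(x')| + |a| \le \del + K'\sqrt{h\vep}
\]
with $x' \in B_\rho(\bar{x}) \subset \El(x_0, h)$, which is the desired conclusion. The main difficulty is precisely the last step, the passage from "near-minimum" to "small-gradient-nearby": since $\w$ is only assumed $C^1$, there is no Hessian bound to exploit, and one must instead route the argument through the convex KKT identity at the boundary minimizer of a ball sitting inside $\El(x_0, h)$; the buffer $\vep^{1/2}$ in the hypothesis is calibrated exactly so that the admissible radius $\sqrt{\vep/h}$ makes the $2\vep/\rho$ loss land on the target $\sqrt{h\vep}$ scale.
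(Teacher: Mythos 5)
Your proof is correct in its essentials but follows a genuinely different route from the paper. The paper's argument is a barrier construction: after translating $x_0$ to the origin it introduces $\bar{\w}(x) := \w(x) + \vep + \vep^{1/2}(Ax\cdot x - 2h)$, which lies above $\u$ on $\pa\El(h)$ and below $\u$ in $\El(h-\vep^{1/2})$; sliding the $\G$-support of $\u$ at a point of $\El(h-\vep^{1/2})$ down (increasing $v$) until it touches $\bar{\w}$ from below then places $\pa_\G\u(\El(h-\vep^{1/2})) \subset \pa_\G\bar{\w}(\El(h))$, and the conclusion follows from $|\Gexp_{x,\bar{\w}(x)}(\nab\bar{\w}(x)) - \nab\bar{\w}(x)| \leq \del$ together with $|\nab\bar{\w} - \nab\w| \leq 2\vep^{1/2}K|x| \leq 4K^{3/2}(h\vep)^{1/2}$ on $\El(h)$. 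Your argument instead fixes $\bar y \in \pa_\G \u(\bar x)$, linearizes the $\G$-support at $\bar x$ to produce the convex function $\phi = \w - (\bar y + a)\cdot x$ with a near-minimum at $\bar x$ on $\El(x_0,h)$ up to error $-C\del|x-\bar x|^2 - 2\vep$, and then exploits the elementary fact that a convex function close to its minimum over a ball of radius $\rho$ has, at the constrained minimizer $x'$, $|\nab\phi(x')| \leq (\phi(\bar x)-\phi(x'))/\rho$; the hypothesis $\vep < h^2$ feeds in precisely to guarantee $B_\rho(\bar x)\subset\El(x_0,h)$ with $\rho\sim\sqrt{\vep/h}$, which lands the gradient bound on the $\sqrt{h\vep}$ scale. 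This KKT-style route avoids the barrier and the sliding of $\G$-supports altogether, which is a nice elementary alternative; what it gives up is the sharp coefficient in front of $\del$: the extra Taylor-remainder term $C\del\rho$ only absorbs into $\del$ up to a constant (or into $\sqrt{h\vep}$ under the additional assumption $\del\lesssim h$, which the lemma does not impose), so you end up with $\N_{C_K\del + K'(h\vep)^{1/2}}$ rather than $\N_{\del+K'(h\vep)^{1/2}}$. Since $\del$ is a free small parameter and every application of the lemma has slack, this is cosmetic, but you should be explicit that your version of the lemma has a constant $C_K$ in front of $\del$, or renormalize the hypothesis to get the stated radius.
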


\begin{proof}
Up to a change of coordinates, we can assume that $x_0 = 0$.
Let $\El(h) = \El(0,h)$ and
define
\[
\bar{\w}(x) := \w(x) + \vep + \vep^{1/2}(Ax \cdot x - 2h).
\]
By construction, $\bar{\w} \geq \u$ outside $\El(h)$ and $\bar{\w} \leq \u$ inside $\El(h -\vep^{1/2})$.
Therefore, if $\mathscr{G}_{x,y,v}$ is a $\G$-support for $\u$ at $x \in \El(h - \vep^{1/2})$, then $\mathscr{G}_{x,y,\bar{v}}$ will touch $\bar{\w}$ from below at a point $\bar{x} \in \El(h)$ for some $\bar{v} \geq v$.
Moreover, $(\bar{x},y,\bar{v}) \in \mathfrak{g}$ since $\bar{v} = \H(\bar{x},y,\bar{\w}(\bar{x})) \in (-R^2,R^2)$.
Hence,
\begin{equation}
\label{eqn: Gsub u inside Gsub wbar}
\pa_\G \u (\El(h - \vep^{1/2})) \subset \pa_\G \bar{\w}(\El(h)).
\end{equation}
Note that even though $\bar{\w}$ may not be $\G$-convex, we can still consider its $\G$-subdifferential;
it just might be empty at some points.
In particular, the equality $\pa_\G \bar{\w}(x) = \Gexp_{x,\bar{\w}(x)}(\nab \bar{\w}(x))$ still holds.
Thus, since $|\Gexp_{x,\bar{\w}(x)}(\nab \bar{\w}(x)) - \nab \bar{\w}(x)| \leq \del$ by assumption, we find that
\begin{equation}
\label{eqn: delta nhood of Gsub wbar}
\pa_\G \bar{\w}(\El(h)) \subset \N_{\del}(\nab\bar{\w}(\El(h)).
\end{equation}
From \eqref{eqn: matrix is uni elliptic}, we determine that $\El(h) \subset B_{(2Kh)^{1/2}}$.
And so since
\[
|\nab \bar{\w}(x)| \leq |\nab \w(x)| + 2\vep^{1/2}K|x|,
\]
recalling \eqref{eqn: matrix is uni elliptic}, it follows that
\begin{equation}
\label{eqn: subdiff w v subdiff wbar}
\N_{\del}(\nab \bar{\w}(\El(h))) \subset \N_{\del + 4K(Kh\vep)^{1/2}}(\nab \w(\El(h))).
\end{equation}
Finally, combining \eqref{eqn: Gsub u inside Gsub wbar}, \eqref{eqn: delta nhood of Gsub wbar}, and \eqref{eqn: subdiff w v subdiff wbar}, we deduce that
\[
\pa_\G \u(\El(h -\vep^{1/2})) \subset \N_{\del+K'(h\vep)^{1/2}}(\nab \w(\El(h)))
\]
with $K' = 4K^{3/2}$, as desired.
\end{proof}

\begin{proof}[Proof of Proposition~\ref{prop: C1b ptwise}]
The proof will be done in four steps.\\

{\it -- Step 1: $\u$ and its $\G$-sections are close to a strictly convex solution of a Monge--Amp\`{e}re equation and its sections.}\\\\
Using Lemma~\ref{lem: u is close to ot soln} and arguing exactly as in \cite{DF}, we find the existence of a strictly convex function $\w$ such that $\w(0) = \u(0) = 0$,
\begin{equation}
\label{eqn: u close from lem}
\|\u - \w\|_{C^0(B_{1/3})} \leq \om(\del),
\end{equation}
and
\begin{equation}
\label{eqn: MA equation}
\det (D^2 \w) = 1 \quad\text{in } B_{1/4}
\end{equation}
in the Alexandrov sense.
Furthermore, there exists a constant $K_0 = K_0(n) > 0$ such that
\begin{equation}
\label{eqn: pogo plus shauder}
\|\w\|_{C^3(B_{1/5})} \leq K_0 \qquad\text{and}\qquad \frac{1}{K_0} \Id \leq D^2 \w \leq K_0 \Id \quad\text{in }B_{1/5}.
\end{equation}
And so
\begin{equation}
\label{eqn: section height}
S(\w,h) := \{x : \w(x) \leq \nab \w(0) \cdot x + h \} \subset B_{(2K_0h)^{1/2}}.
\end{equation}
(Precisely, this is Step 1 in the proof of \cite[Theorem 4.3]{DF}, which uses \eqref{eqn: prop u close to parabola}.)
By \eqref{eqn: pogo plus shauder} and as $\u$ is semiconvex with a semiconvexity constant depending only on $\|D^2_x\G\|_{C^0(\B_3)}$, i.e., $\delta$ (\eqref{eqn: Gbar small}), we have that $\u - \w$ is semiconvex with a semiconvexity constant depending on dimension (recall that $\delta \ll 1$).
So using \eqref{eqn: u close from lem}, we deduce that
\begin{equation}
\label{eqn: grad minus Ggrad}
|\nab \w(0)| \leq K_1\om(\del)^{1/2}
\end{equation}
for some constant $K_1 = K_1(n) > 0$ (cf. \eqref{eqn: dist point and slope}, noticing that $-\nab \w(0) \in \pa^-(\u - \w + c|\cdot|^2)(0)$ for some $c > 0$ depending on $n$).

Define
\[
S_{\G}(\u,h) := \{ x : \u(x) \leq h \}.
\]
We claim that if $\del$ and $h$ are sufficiently small, then
\begin{equation}
\label{eqn: sections are comparable}
S(\w, h - K_2\om(\del)^{1/2}) \subset S_{\G}(\u,h) \subset S(\w, h + K_2\om(\del)^{1/2}) \Subset B_{1/6}
\end{equation}
where $K_2 = K_2(n) > 0$.
First, by \eqref{eqn: section height}, we can choose $\del$ and $h$ sufficiently small so that the last inclusion holds.
To conclude, let $x \in S(\w, h - K_2\om(\del)^{1/2})$.
Then, by \eqref{eqn: u close from lem}, recalling that $\u(0) = 0$, and from \eqref{eqn: grad minus Ggrad}, we deduce that
\[
\begin{split}
\u(x) &\leq \nab \w(0) \cdot x + h - K_2\om(\del)^{1/2} + \om(\del) \\
&\leq h + K_1\om(\del)^{1/2} - K_2\om(\del)^{1/2} + \om(\del)\\
&\leq h
\end{split}
\]
taking $K_2 = K_1 + 1$.
This proves the first inclusion; the proof of the second is analogous.
\\

{\it -- Step 2: The $\G$-sections of $\u$ and their images under $\pa_{\G} \u$ are close to ellipsoids with controlled eccentricity and $\u$ is close to a paraboloid at some small scale.}\\\\
We claim that for every small $\eta > 0$, there exist constants $h_0 = h_0(\eta,n) > 0$ and $\del = \del(h_0, \eta,n) > 0$ such that the following holds: there exists a symmetric matrix satisfying
\begin{equation}
\label{eqn: thm matrix is uni elliptic}
\frac{1}{K_3} \Id \leq A \leq K_3 \Id,
\end{equation}
\begin{equation}
\label{eqn: thm matrix has det 1}
\det(A) = 1,
\end{equation}
\begin{equation}
\label{eqn: sections are round}
AB_{h_0^{1/2}/3} \subset S_{\G}(\u,h_0) \subset AB_{3h_0^{1/2}},
\end{equation}
and
\begin{equation}
\label{eqn: Gsubs are round}
A^{-1}B_{h_0^{1/2}/3} \subset \pa_{\G} \u(S_{\G}(\u,h_0)) \subset A^{-1}B_{3h_0^{1/2}}.
\end{equation}
Moreover,
\begin{equation}
\label{eqn: thm u is close to smooth fn}
\bigg{\|} \u - \frac{1}{2}|A^{-1}x|^2\bigg{\|}_{C^0\big(AB_{3h_0^{1/2}}\big)} \leq \eta h_0.
\end{equation}
Here, $K_3 = K_3(n) > 0$.

Let
\[
A := [D^2\w(0)]^{-1/2}.
\]
With $A$ defined in this way, using \eqref{eqn: pogo plus shauder} and \eqref{eqn: MA equation}, we see that \eqref{eqn: thm matrix is uni elliptic} and \eqref{eqn: thm matrix has det 1} hold.

Notice that \eqref{eqn: sections are round} is equivalent to
\[
\El(h_0/18) \subset S_{\G}(\u,h_0) \subset \El(9h_0/2)
\]
where
\[
\El(h) := \bigg{\{} x : \frac{1}{2}D^2\w(0)x\cdot x \leq h \bigg{\}}.
\] 
Now from \eqref{eqn: pogo plus shauder}, we deduce
\begin{equation}
\label{eqn: ellipse/section/ball}
\El(h) \subset B_{(2K_0h)^{1/2}}.
\end{equation}
Consequently, 
\[
\El(h) \subset S(\w,h+ K_0(2K_0h)^{3/2})
\qquad\text{and}\qquad
S(\w,h) \subset \El(h+ K_0(2K_0h)^{3/2}).
\]
The second inclusion follows from \eqref{eqn: section height}.
Thus, applying \eqref{eqn: sections are comparable}, we see that 
\[
\El(h_0/18) \subset S_{\G}(\u,h_0)
\]
provided that $h_0$ and $\del$ are sufficiently small depending only on $n$.
On the other hand, applying \eqref{eqn: sections are comparable}, we see that
\[
S_{\G}(\u,h_0) \subset \El(9h_0/2)
\]
so long as $\del$ and $h_0$ are sufficiently small, again, depending only on $n$.
Whence, \eqref{eqn: sections are round} holds, as desired.
More generally, for every $c < 1$ and $C > 1$, we can find $\del$ and $h_0$ sufficiently small so that
\begin{equation}
\label{eqn: Gsection comp to ellipse}
\El(ch_0) \subset S_{\G}(\u,h_0) \subset \El(Ch_0).
\end{equation}

Let us now prove \eqref{eqn: Gsubs are round}.
To do this, we consider the $\G$-transform of $\u$ and the Legendre transform of $\w$.
Specifically,
\[
\v(y) := \sup_{x \in B_{1/5}} \H(x,y,\u(x)) \qquad\text{and}\qquad \w^*(y) := \sup_{x \in B_{1/5}} \{ x \cdot y - \w(x)\}.
\]
Notice that by \eqref{eqn: Gbar small} and \eqref{eqn: u close from lem},
\[
\|\v - \w^*\|_{C^0(B_{1/3})} \leq \om(\del) + \del \leq 2\om(\del).
\]
Also, observe that
\begin{equation}
\label{eqn: legendre ids}
\nab \w^* = [\nab \w]^{-1} \qquad\text{and}\qquad D^2\w^*(\nab \w(x)) = [D^2\w(x)]^{-1}.
\end{equation}
Moreover, from \eqref{eqn: pogo plus shauder}, $\w^*$ is uniformly convex and of class $C^3$ in the open set $\nab \w(B_{1/5})$.
Let
\[
\El^*(h) := \bigg{\{} y : \frac{1}{2}D^2 \w^*(\nab \w(0))(y - \nab \w(0))\cdot (y-\nab \w(0)) \leq h \bigg{\}}.
\]
Thanks to \eqref{eqn: pogo plus shauder}, for every $c < 1$ and $C > 1$, we can find $h_0$ sufficiently small so that
\begin{equation}
\label{eqn: grad map w ellipsiod v w sections}
\nab \w(\El(ch_0)) \subset \El^*(h_0) \subset \nab \w (\El(Ch_0)).
\end{equation}
Indeed, using \eqref{eqn: legendre ids} and \eqref{eqn: pogo plus shauder}, we find that
\[
|D^2 \w^*(\nab \w(0))(\nab \w(x)- \nab \w(0))\cdot(\nab \w(x) - \nab \w(0)) - D^2 \w(0)x\cdot x| \leq 2K_0|x|^3 + K_0^3|x|^4
\]
and that $|x| \leq K_0(2K_0h)^{1/2}$ whenever $\nab \w(x) \in \El^*(h)$.
Combining these last two inequalities and \eqref{eqn: ellipse/section/ball} proves \eqref{eqn: grad map w ellipsiod v w sections}.
Then, by \eqref{eqn: Gsection comp to ellipse} with $C = 3/2$, Lemma~\ref{lem: Gsub of u lives in an nhood of sub of ot soln} applied with $h = 3h_0/2 + \om(\del)^{1/2}$, and \eqref{eqn: grad map w ellipsiod v w sections}, we determine that 
\[
\pa_{\G} \u (S_{\G}(\u,h_0)) \subset \N_{K_0''\om(\del)^{1/2}}(\nab \w(\El(2h_0))) \subset \El^*(7h_0/2)
\]
for $\del$ and $h_0$ sufficiently small, depending only on dimension.
Hence, recalling \eqref{eqn: grad minus Ggrad} and choosing $\del$ sufficiently small, we find that the second inclusion in \eqref{eqn: Gsubs are round} holds.
In order to conclude, we must show that
\[
\El^*(h_0/16) \subset \pa_{\G} \u (S_{\G}(\u,h_0)).
\]
The first inclusion in \eqref{eqn: Gsubs are round} then follows, again, by \eqref{eqn: grad minus Ggrad} and choosing $\del$ sufficiently small. 
Since
\[
E \subset \pa_{\G}\u(\pa_{\H} \v(E)) \quad \forall E,
\]
considering \eqref{eqn: Gsection comp to ellipse}, we see it suffices to show that
\[
\pa_{\H} \v(\El^*(h_0/16)) \subset \El(h_0/3).
\]
Applying Lemma~\ref{lem: Gsub of u lives in an nhood of sub of ot soln} to $\v$ and $\w^*$, provided that $\del$ is small enough, we determine that
\[
\pa_\H \v(\El^*(h_0/16)) \subset \N_{K_0'''\om(\del)^{1/2}}(\nab \w^*(\El^*(h_0/8))) \subset \El(h_0/3).
\]
Here, we have used \eqref{eqn: grad map w ellipsiod v w sections} and \eqref{eqn: legendre ids} for the second inclusion.
Thus, \eqref{eqn: Gsubs are round} indeed holds after taking $\del$ and $h_0$ sufficiently small.

Finally, from \eqref{eqn: u close from lem}, \eqref{eqn: pogo plus shauder}, \eqref{eqn: grad minus Ggrad}, and \eqref{eqn: thm matrix is uni elliptic}, we see that
\[
\begin{split}
\bigg{\|} \u - \frac{1}{2}|A^{-1}x|^2\bigg{\|}_{C^0\big(AB_{3h_0^{1/2}}\big)}  &\leq \|\u - \w\|_{C^0\big(AB_{3h_0^{1/2}}\big)} + \bigg{\|} \w - \frac{1}{2}|A^{-1}x|^2\bigg{\|}_{C^0\big(AB_{3h_0^{1/2}}\big)}\\
&\leq \om(\del) + \|\nab \w(0)\cdot x\|_{C^0\big(AB_{3h_0^{1/2}}\big)} + \|K_0|x|^3\|_{C^0\big(AB_{3h_0^{1/2}}\big)}\\
&\leq \om(\del) + 3K_1K_3\om(\del)^{1/2}h_0^{1/2} + 27K_0K_3^3h_0^{3/2} \\
&\leq \eta h_0
\end{split}
\]
where the last inequality follows after first choosing $h_0$ sufficiently small and then choosing $\del$ even smaller.
\\

{\it -- Step 3: An iterative construction.}\\\\
Set
\[
\tilde{x} := \frac{1}{h_0^{1/2}}A^{-1}x, \qquad \tilde{y} := \frac{1}{h_0^{1/2}}Ay, \qquad \tilde{v} := \frac{1}{h_0}v, \qquad\text{and}\qquad \tilde{u} := \frac{1}{h_0}u.
\]
Define
\[
\G_1(\tilde{x},\tilde{y},\tilde{v}) := \frac{\G(h_0^{1/2}A\tilde{x},h_0^{1/2}A^{-1}\tilde{y},h_0 \tilde{v})}{h_0} \qquad\text{and}\qquad
\u_1(\tilde{x}) := \frac{\u(h_0^{1/2}A\tilde{x})}{h_0}.
\]
Similarly, let
\[
\H_1(\tilde{x},\tilde{y},\tilde{u}) := \frac{\H(h_0^{1/2}A\tilde{x},h_0^{1/2}A^{-1}\tilde{y},h_0 \tilde{u})}{h_0}.
\] 
Recalling \eqref{eqn: extra 1}, \eqref{eqn: extra 2}, \eqref{eqn: Gbar small}, and that we are considering parabolically quadratic rescalings, we see that
\[
\|\G_1 - \tilde{x} \cdot \tilde{y} + \tilde{v}\|_{C^{2,\alpha}(\B_3)} + \|\H_1 - \tilde{x} \cdot \tilde{y} + \tilde{u}\|_{C{2,\alpha}(\B_3)} \leq \del
\]
provided $h_0 = h_0(K_3,\alpha) \ll 1$.
In addition, thanks to \eqref{eqn: sections are round} and \eqref{eqn: Gsubs are round}, we have the inclusions
\[
B_{1/3} \subset \C_1, \K_1 \subset B_3
\]
where, recalling that $\G_1(\cdot,0,0) \equiv 0$,
\[
\C_1 := S_{\G_1}(\u_1,1) = \{\u_1 \leq 1 \} \qquad\text{and}\qquad \K_1 := \pa_{\G_1} \u_1(S_{\G_1}(\u_1,1)).
\]
Arguing as in Proposition~\ref{prop: comparison}, we find that $\C_1$ is a closed, $(C_3 \delta)$-semiconvex set.
Furthermore, rewriting \eqref{eqn: thm u is close to smooth fn} yields
\[
\bigg\|\u_1 - \frac{1}{2}|\tilde{x}|^2 \bigg\|_{C^0(B_3)} \leq \eta.
\]
Now let
\[
f_1(\tilde{x}) := f(h_0^{1/2}A\tilde{x})\1_{\C_1} \qquad\text{and}\qquad g_1(\tilde{y}) := g(h_0^{1/2}A^{-1}\tilde{y})\1_{\K_1}.
\]
Recalling that $\det(A) = 1$ and arguing as in the proof of Theorem~\ref{thm: main}, we determine that
\[
(\T_{\u_1})_\#f_1 = g_1.
\]
Finally, using \eqref{eqn: prop densities close to constant}, it follows that
\[
\|f_1 - 1\|_{L^\infty(\C_1)} + \|g_1 - 1\|_{L^\infty(\K_1)} \leq \del.
\]
Hence, we can apply Steps 1 and 2 to $\u_1$ and find a symmetric matrix $A_1$ such that
\[
\frac{1}{K_3} \Id \leq A_1 \leq K_3 \Id,
\]
\[
\det(A_1) = 1,
\]
\[
A_1B_{h_0^{1/2}/3} \subset S_{\G_1}(\u_1,h_0) \subset A_1B_{3h_0^{1/2}},
\]
\[
A_1^{-1}B_{h_0^{1/2}/3} \subset \pa_{\G_1} \u(S_{\G_1}(\u_1,h_0)) \subset A_1^{-1}B_{3h_0^{1/2}},
\]
and
\[
\bigg{\|} \u_1 - \frac{1}{2}|A_1^{-1}\tilde{x}|^2\bigg{\|}_{C^0\big(A_1B_{3h_0^{1/2}}\big)} \leq \eta h_0.
\]
(The set $S_{\G_1}(\u_1,h) = \{ \u_1 \leq h \}$.)

We can continue, iteratively constructing
\[
\G_{k+1}(\tilde{x},\tilde{y},\tilde{v}) := \frac{\G_k(h_0^{1/2}A_k \tilde{x},h_0^{1/2}A_k^{-1} \tilde{y}, h_0 \tilde{v})}{h_0} \qquad\text{and}\qquad
\u_{k+1}(\tilde{x}) := \frac{\u_k(h_0^{1/2}A_k \tilde{x})}{h_0}
\]
where $A_k$ is the symmetric matrix constructed in the the $k$-th iteration.
In turn, setting
\[
M_k := A_1 \cdot \hdots \cdot A_k,
\]
we have a sequence of symmetric matrices such that
\begin{equation}
\label{eqn: iteration matrices}
\frac{1}{K_3^k} \Id \leq M_k \leq K_3^k \Id,
\end{equation}
\[
\det(M_k) = 1,
\]
and
\begin{equation}
\label{eqn: iteration sections}
M_kB_{h_0^{k/2}/3} \subset S_{\G_1}(\u_1,h_0^k) \subset M_kB_{3h_0^{k/2}}.
\end{equation}\

{\it -- Step 4: $C^{1,\beta}(0)$-regularity.}\\\\
Let $\beta \in (0,1)$.
By \eqref{eqn: iteration matrices} and \eqref{eqn: iteration sections}, we find that
\begin{equation}
\label{eqn: C1b}
B_{(h_0^{1/2}/3K_3)^k} \subset S_{\G_1}(\u_1,h_0^k) \subset B_{(3K_3h_0^{1/2})^{k}}.
\end{equation}
Defining $r_0 := h_0^{1/2}/3K_3$ and recalling that $\G_1(\cdot,0,0) \equiv 0$, it follows that
\[
\|\u_1\|_{C^0(B_{r_0^k})} \leq h_0^k = (3K_3 r_0)^{2k} \leq r_0^{(1+\beta)k}
\]
provided $h_0$ (and so $r_0$) is sufficiently small.
In other words, $\u_1$ and $\u$ are $C^{1,\beta}$ at the origin.
\end{proof}

With Proposition~\ref{prop: C1b ptwise} in hand, let us now prove Theorem~\ref{thm: close implies C1b}.
The proof amounts to a change of variables.

\begin{proof}[Proof of Theorem~\ref{thm: close implies C1b}]
Let $x_0 \in B_{1/6}$ and $y_0 \in \pa_\G \u(x_0)$.
By \eqref{eqn: thm G close to linear}, observe that
\[
|x_0 - y_0| \leq |x_0 - p_0| + |p_0 - y_0| \leq |x_0 - p_0| + \del_0
\]
where
\[
p_0 := D_x\G(x_0,y_0,\H(x_0,y_0,\u(x_0))).
\]
As $\u$ is semiconvex with a semiconvexity constant depending only on $\|D^2_x\G\|_{C^0(\B_4)} \leq \delta$ (recall \eqref{eqn: thm G close to linear}), there exists a $c > 0$ such that
\[
\w(x) := \u(x) - \frac{1}{2}|x|^2 + c|x-x_0|^2
\]
is convex.
By construction, $p_0 - x_0 \in \pa^-\w(x_0)$.
And so using \eqref{eqn: thm u close to parabola} and since $x_0 + \eta^{1/2}_0\e \in \C$ provided that $\eta_0 < 1/9$, for example, we find that
\[
(p_0 - x_0) \cdot \e \leq \frac{\w(x_0+\eta_0^{1/2}\e) - \w(x_0)}{\eta_0^{1/2}} \leq (2+c)\eta_0^{1/2} \quad \forall \e \in \mathbb{S}^{n-1}.
\]
In turn,
\begin{equation}
\label{eqn: dist point and slope}
|x_0 - y_0| \leq C(\del_0 + \eta_0^{1/2}).
\end{equation}
Set 
\[
u_0 := \u(x_0) \qquad\text{and}\qquad v_0 := \H(x_0,y_0,\u(x_0)).
\]
From \eqref{eqn: thm u close to parabola} and using \eqref{eqn: thm G close to linear} and \eqref{eqn: thm bounds on Ci}, we deduce that
\begin{equation}
\label{eqn: points for first cov}
|u_0| \leq \frac{1}{72} + \eta_0 \qquad\text{and}\qquad |v_0| \leq \frac{1}{3} + \frac{1}{72} + \eta_0 + \del_0.
\end{equation}
Define
\[
M_0 := \E(x_0,y_0,v_0) \qquad\text{and}\qquad a_0 := -D_v\G(x_0,y_0,v_0),
\]
where $\E$ is as defined in \eqref{def: matrix E}.
Observe that \eqref{eqn: thm G close to linear} implies that
\begin{equation}
\label{eqn: size of hat M and hat a}
|a_0 - 1|, |a_0^{-1} - 1| \leq \del_0 \qquad\text{and}\qquad |M_0 - \Id|, |M_0^{-1} - \Id| \leq 2\del_0.
\end{equation}

Now consider the change of variables
\[
\bar{x} := x-x_0,\qquad \bar{y} := M_0(y-y_0), \qquad \bar{v} := a_0(v - v_0), \qquad\text{and}\qquad \bar{u}:= u - u_0.
\] 
Define
\[
\bar{\C} := \C - x_0 \quad\text{and}\quad  \bar{\K} := M_0(\K - y_0);
\]
and set
\[
\bar{f}(\bar{x}) := f(\bar{x}+x_0) \quad\text{and}\quad \bar{g}(\bar{y}) := \det(M_0^{-1})g(M_0^{-1}\bar{y}+y_0).
\]
Then, from \eqref{eqn: thm bounds on Ci}, \eqref{eqn: dist point and slope}, and \eqref{eqn: size of hat M and hat a}, we see that
\[
B_{1/3} \subset \bar{\C}, \bar{\K} \subset B_3
\]
if $\del_0$ and $\eta_0$ are sufficiently small.
From \eqref{eqn: size of hat M and hat a}, we have that $|\det (M_0) - 1| \leq (1+4n)\del_0$ if $\del_0$ is sufficiently small.
Thus,
\begin{equation*}
\|\bar{f} - 1\|_{L^\infty(\bar{\C})} + \|\bar{g} - 1\|_{L^\infty(\bar{\K})} \leq 4(1+n)\del_0,
\end{equation*}
recalling \eqref{eqn: thm densities close to constant}.
Let
\[
\bar{\G}(\bar{x},\bar{y},\bar{v}) := \G(x,y,v-v_0+\H(x_0,y,u_0)) - \G(x,y_0,v_0).
\]
Notice that the dual of $\bar{\G}$ is
\[
\bar{\H}(\bar{x},\bar{y},\bar{u}) = a_0(\H(x,y,u - u_0+\G(x,y_0,v_0)) - \H(x_0,y,u_0)),
\]
and $\bar{\H}$ is well-defined on $\B_3$ by the assumption that $\B_4 \subset \mathfrak{h}$ and our estimates on $x_0$, $y_0$, $u_0$, $v_0$, $a_0$, and $M_0$.
Similarly, from \eqref{eqn: thm G close to linear}, \eqref{eqn: dist point and slope}, \eqref{eqn: points for first cov}, and \eqref{eqn: size of hat M and hat a}, it follows that
\begin{equation*}
\|\bar{\G} - \bar{x} \cdot \bar{y} + \bar{v}\|_{C^{2,\alpha}(\B_3)} + \|\bar{\H} - \bar{x} \cdot \bar{y} + \bar{u}\|_{C^{2,\alpha}(\B_3)}\leq C_0\del_0.
\end{equation*}
In particular,
\begin{equation*}
\bar{\G}(\cdot,0,0) = \bar{\G}(0,\cdot,0) = \bar{\H}(\cdot,0,0) = \bar{\H}(0,\cdot,0) \equiv 0.
\end{equation*}
Also, computations show that
\begin{equation*}
D_{\bar{v}}\bar{\G}(0,0,0) = D_{\bar{u}}\bar{\H}(0,0,0) = -1
\qquad\text{and}\qquad
D_{\bar{x}\bar{y}} \bar{\G}(0,0,0) = D_{\bar{x}\bar{y}} \bar{\H}(0,0,0) = \Id.
\end{equation*}
Set
\[
\bar{\del} := \min\{ 4(1+n)\del_0, C_0\del_0\}.
\]
Finally, define
\[
\bar{\u}(\bar{x}) : = \u(x) - \G(x,y_0,v_0).
\]
Arguing as in the proof of Theorem~\ref{thm: main}, we see that $\bar{\u}$ is $\bar{\G}$-convex and $(\T_{\bar{u}})_\# \bar{f} = \bar{g}$.
Furthermore, from \eqref{eqn: thm u close to parabola}, \eqref{eqn: thm G close to linear}, and \eqref{eqn: dist point and slope}, we determine that
\[
\bigg{\|}\bar{\u} - \frac{1}{2}|\bar{x}|^2\bigg{\|}_{C^0(B_3)} \leq 2\eta_0 + 2\del_0 + C(\del_0 + \eta_0^{1/2}) =: \bar{\eta}.
\]
Indeed, recalling that $v_0 := \H(x_0,y_0,\u(x_0))$,
\[
\begin{split}
\bigg{|}\bar{\u} - \frac{1}{2}|\bar{x}|^2\bigg{|}
&= \bigg{|}\u(x) - \G(x,y_0,v_0) - \frac{1}{2}x\cdot x + x \cdot x_0 + \u(x_0) -\u(x_0) +\frac{1}{2} x_0 \cdot x_0 - x_0 \cdot x_0 \bigg{|}\\
&\leq 2\eta_0 + |- \G(x,y_0,v_0) + x \cdot x_0 + \u(x_0) - x_0 \cdot x_0 |\\
&= 2\eta_0 + |x\cdot y_0  - v_0 - \G(x,y_0,v_0) - x \cdot y_0 + v_0 - x_0 \cdot y_0 + \u(x_0) \\
&\hspace{0.91in}+ x_0 \cdot y_0 + x \cdot x_0  - x_0 \cdot x_0 |\\
&\leq 2\eta_0 + 2\del_0 + (|x| + |x_0|)|x_0 -y_0|\\
&\leq 2\eta_0 + 2\del_0 + C(\del_0 + \eta_0^{1/2}).
\end{split}
\]

In summary, we see that $\bar{\u}$, $\bar{\G}$, $\bar{\H}$, $\bar{f}$, $\bar{g}$, $\bar{\C}$, and $\bar{\K}$ satisfy the hypotheses of Proposition~\ref{prop: C1b ptwise}.
Hence, taking $\del_0$ and $\eta_0$, in turn, $\bar{\del}$ and $\bar{\eta}$, sufficiently small, we find that $\u \in C^{1,\beta}(B_{1/6})$, as desired.
\end{proof}

An important corollary of Theorem~\ref{thm: close implies C1b} is a strict $\G$-convexity estimate for $\u$ in $B_{1/6}$.

\begin{corollary}
\label{cor: strict Cconvexity}
Under the hypotheses of Theorem~\ref{thm: close implies C1b}, we find that $\u$ is strictly $\G$-convex in $B_{1/6}$.
More precisely, for all $\sigma > 2$, there exist constants $\eta_0, \del_0 > 0$, depending on $\sigma$ and dimension, such that for all $x_0 \in B_{1/6}$, we have that
\begin{equation}
\label{eqn: strict Gconvex}
\inf_{\pa B_d(x_0)} \{ \u - \mathscr{G}_{x_0,y_0,v_0} \} \geq c_0d^\sigma \quad \forall d \leq \dist(x_0,\pa B_{1/6})
\end{equation}
for some constant $c_0 = c_0(\sigma,n) > 0$.
Here, $y_0 \in \pa_\G \u(x_0)$ and $v_0 := \H(x_0,y_0,\u(x_0))$.
\end{corollary}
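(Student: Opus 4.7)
The strategy is to extract the required lower bound from the iterative construction in Step~3 of the proof of Proposition~\ref{prop: C1b ptwise}. Fix $\sigma > 2$ and $x_0 \in B_{1/6}$, and apply the change of variables used in the proof of Theorem~\ref{thm: close implies C1b}. This produces normalized data $\bar{\G}$, $\bar{\H}$, $\bar f$, $\bar g$, $\bar{\C}$, $\bar{\K}$ and a $\bar{\G}$-convex function $\bar{\u}$ satisfying the hypotheses of Proposition~\ref{prop: C1b ptwise}, with the key identity
\[
\bar{\u}(\bar x) = \u(x) - \G(x, y_0, v_0) = \u(x) - \mathscr{G}_{x_0,y_0,v_0}(x).
\]
Accordingly, \eqref{eqn: strict Gconvex} is equivalent to $\inf_{\pa B_d} \bar{\u} \geq c_0 d^\sigma$ for every $d \leq \dist(x_0, \pa B_{1/6})$ in the normalized coordinates.

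The crucial input is the outer half of the inclusion produced, for every $k \geq 0$, by the iteration of Proposition~\ref{prop: C1b ptwise}:
\[
S_{\G_1}(\u_1, h_0^k) \subset M_k B_{3 h_0^{k/2}} \subset B_{3(K_3 h_0^{1/2})^k}.
\]
Setting $\rho := K_3 h_0^{1/2}$, its contrapositive reads $\u_1(\tilde x) > h_0^k$ whenever $|\tilde x| > 3\rho^k$. Given $\tilde d \in (0, 3]$, choosing the least $k \geq 0$ with $3\rho^k \leq \tilde d$ yields
\[
\u_1(\tilde x) \geq h_0\,(\tilde d/3)^{\sigma'}, \qquad \sigma' := \frac{\log h_0}{\log(K_3 h_0^{1/2})}, \qquad \tilde d = |\tilde x|,
\]
together with the cruder bound $\u_1 > 1$ on $\{|\tilde x| > 3\}$ (the $k = 0$ case). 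An elementary monotonicity check shows $\sigma' \searrow 2$ as $h_0 \to 0^+$, so given $\sigma > 2$ one can fix $h_0$ small enough (depending only on $\sigma$ and $n$) to force $\sigma' \leq \sigma$. Since $\tilde d \leq 1$, this upgrades the estimate to $\u_1(\tilde x) \geq (h_0/3^\sigma)|\tilde x|^\sigma$ on $B_3$.

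Reversing the rescaling $\tilde x = h_0^{-1/2} A^{-1} \bar x$ from Step~3, together with the ellipticity bound $\|A^{-1}\| \leq K_3$, converts this into $\bar{\u}(\bar x) \geq c(h_0, K_3) |\bar x|^\sigma$ on the region where $|\tilde x| \leq 3$; the companion bound $\u_1 > 1$ combined with $|\bar x| \leq 1/6$ yields an analogous estimate on the complementary region, so the two fuse into a single bound $\bar{\u}(\bar x) \geq c_0 |\bar x|^\sigma$. Undoing finally the translation-and-linear-map change of coordinates of Theorem~\ref{thm: close implies C1b} preserves the power $|x-x_0|^\sigma$ up to dimensional factors absorbed into $c_0$, and the thresholds $\del_0$ and $\eta_0$ are those supplied by Proposition~\ref{prop: C1b ptwise} for the chosen $h_0$.

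I expect the principal obstacle to be the bookkeeping surrounding the factor $h_0^{1 - \sigma/2}$ that enters through the rescaling. For $\sigma > 2$ this factor is in fact \emph{large} for small $h_0$, so there is no genuine tension between the smallness requirements on $h_0$ (small enough to run the iteration, small enough to force $\sigma' \leq \sigma$); nevertheless, the composition of two successive changes of variables must be tracked carefully to confirm that $c_0$ depends only on $\sigma$ and dimension. The degeneration $c_0 \to 0$ as $\sigma \to 2^+$ is expected and reflects that the sharp exponent $\sigma = 2$ is not attainable in general, in parallel with the optimal transport setting.
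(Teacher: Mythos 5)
Your proof is correct and follows essentially the same route as the paper's: both extract the strict $\G$-convexity bound from the nested section inclusion \eqref{eqn: C1b} produced by the iteration in Step~3 of Proposition~\ref{prop: C1b ptwise}, observe that the effective exponent tends to $2$ as $h_0 \to 0$ so that $\sigma' \leq \sigma$ can be enforced, and then unwind the two changes of variables back to $\u - \mathscr{G}_{x_0,y_0,v_0}$. The paper compresses this into a single display ($\inf_{\pa B_d}\u_1 \geq d_0^\sigma d^\sigma$ for $d \leq d_0$, plus the rescaling), whereas you make the exponent $\sigma'$ and the outer-scale case $|\tilde x| > 3$ explicit, but the underlying computation is the same.
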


\begin{proof}
Let $\u_1$ be as in Proposition~\ref{prop: C1b ptwise} and set $d_0 := 3K_3h_0^{1/2}$.
Then, from \eqref{eqn: C1b}, we deduce that
\[
\inf_{\pa B_d} \u_1 \geq d_0^\sigma d^\sigma \quad \forall d \leq d_0
\]
provided $d_0$ is sufficiently small depending on $\sigma$ and dimension.
In turn,
\[
\inf_{\pa B_{d}(x_0)} \{ \u - \mathscr{G}_{x_0,y_0,v_0} \} \geq h_0d_0^\sigma \bigg(\frac{h_0^{1/2}d}{K_3}\bigg)^\sigma \geq c_0d^\sigma \quad \forall d \leq \dist(x_0,\pa B_{1/6})
\]
taking $c_0 = c_0(\sigma,n) > 0$ sufficiently small, as desired. 
\end{proof}

From the strict $\G$-convexity of $\u$ in $B_{1/6}$, we deduce that $\T_\u(B_{1/6})$ is open, a key fact used in the proof of Theorem~\ref{thm: main}.

\begin{corollary}
\label{cor: Tu is locally open}
Under the hypotheses of Theorem~\ref{thm: close implies C1b}, we have that $\T_\u(B_{1/6})$ is open.
\end{corollary}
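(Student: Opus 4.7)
The plan is to fix $y_0 \in \T_\u(B_{1/6})$, pick a preimage $x_0 \in B_{1/6}$ with $\T_\u(x_0) = y_0$, set $v_0 := \H(x_0, y_0, \u(x_0))$, and then, for every $y$ in a small neighborhood of $y_0$, produce a point $\bar{x}(y) \in B_{1/6}$ with $\T_\u(\bar{x}(y)) = y$. The mechanism is a sliding support argument, powered by the strict $\G$-convexity from Corollary~\ref{cor: strict Cconvexity}.

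First I would choose $r > 0$ with $\overline{B}_r(x_0) \subset B_{1/6}$ and invoke Corollary~\ref{cor: strict Cconvexity} to record the quantitative separation
\[
\inf_{\pa B_r(x_0)} \bigl[\u - \G(\cdot, y_0, v_0)\bigr] \geq c_0 r^\sigma > 0.
\]
By continuity of $\G$, this strict separation on $\pa B_r(x_0)$ persists when $(y,v)$ is sufficiently close to $(y_0, v_0)$. For each such $y$, I would use (G-Mono) --- which makes $v \mapsto \G(x,y,v)$ strictly decreasing, with $|D_v\G|$ bounded away from zero on compact subsets of $\mathfrak{g}$ --- to select $v(y)$ as the unique value of $v$ for which
\[
\inf_{x \in \overline{B}_r(x_0)} \bigl[\u(x) - \G(x, y, v)\bigr] = 0.
\]
Monotonicity and continuity deliver existence, uniqueness, and continuity of $v(y)$ with $v(y_0) = v_0$; the openness of $\mathfrak{g}$ then ensures $(\bar{x}(y), y, v(y)) \in \mathfrak{g}$ for $y$ near $y_0$, where $\bar{x}(y) \in \overline{B}_r(x_0)$ is any attaining point of the infimum.

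The boundary separation forces $\bar{x}(y) \in B_r(x_0) \subset B_{1/6}$, and there $\u$ is differentiable by Theorem~\ref{thm: close implies C1b}. The first-order condition at the interior touching point gives $\nab \u(\bar{x}(y)) = D_x \G(\bar{x}(y), y, v(y))$, and the identity $\G(\bar{x}(y), y, v(y)) = \u(\bar{x}(y))$ combined with the definition of $\H$ forces $v(y) = \H(\bar{x}(y), y, \u(\bar{x}(y)))$. The definition of $\Gexp$ then yields
\[
\T_\u(\bar{x}(y)) = \Gexp_{\bar{x}(y), \u(\bar{x}(y))}(\nab \u(\bar{x}(y))) = y,
\]
so $y \in \T_\u(B_{1/6})$ and hence $\T_\u(B_{1/6})$ is open. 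The only real obstacle is ensuring admissibility of the sliding parameter, i.e., that $(\bar{x}(y), y, v(y))$ remains in $\mathfrak{g}$ so $\H$ and $\Gexp$ may be applied; this is handled by the openness of $\mathfrak{g}$ together with the continuity $v(y) \to v_0$ as $y \to y_0$.
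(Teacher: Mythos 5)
Your argument is correct and is essentially the paper's proof viewed in the ``primal'' picture rather than the ``dual'' one: where you slide $v$ down until $\G(\cdot,y,v)$ touches $\u$ from below over $\overline{B}_r(x_0)$, the paper equivalently maximizes $z \mapsto \H(z,y,\u(z))$ over $\overline{B}_d(x_0)$, and by (G-Mono) the optimal $v$ you produce is exactly that maximum value. Both proofs then invoke Corollary~\ref{cor: strict Cconvexity} in the same way to force the touching point off $\pa B_r(x_0)$, and conclude via the first-order identity $\nab\u(\bar{x}) = D_x\G(\bar{x},y,\H(\bar{x},y,\u(\bar{x})))$ that $\T_\u(\bar{x}) = y$.
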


\begin{proof}
Since $\u$ is differentiable in $B_{1/6}$, we have that $\T_\u(B_{1/6}) = \pa_\G \u(B_{1/6})$.
We show that for each $x_0 \in B_{1/6}$, there exists an $\vep_0 > 0$ such that for any $y \in B_{\vep_0}(y_0)$, the map 
\[
z \mapsto \H(z,y,\u(z))
\]
has a local maximum at some point $x \in B_{1/6}$.
Here, $\{y_0\} := \pa_\G \u(x_0)$.
If so, then
\[
\nab \u(x) = D_x \G(x,y,\H(x,y,\u(x)));
\]
that is, $\{y\} = \pa_\G \u(x)$ and $B_{\vep_0}(y_0) \subset \T_\u(B_{1/6})$, as desired.
To this end, let $d > 0$ be such that $B_d(x_0) \subset B_{1/6}$ and
\[
x \in \argmax_{z \in \overline{B}_d(x_0)} \H(z,y,\u(z)).
\]
Since $\u(x) = \G(x,y,\H(x,y,\u(x)))$ and $\G$ is decreasing in $v$, we observe that
\[
\begin{split}
\u(x) - \mathscr{G}_{x_0,y_0,v_0}(x) &\leq \G(x,y,\H(x_0,y,\u(x_0))) - \G(x,y_0,\H(x_0,y_0,\u(x_0))) \\
&= \G(x,y,\H(x_0,y,\u(x_0))) - \G(x,y_0,\H(x_0,y,\u(x_0))) \\ 
&\hspace{1.0cm} + \G(x,y_0,\H(x_0,y,\u(x_0)))- \G(x,y_0,\H(x_0,y_0,\u(x_0))) \\
&\leq C\vep_0
\end{split}
\]
with $C := \|\G\|_{C^1(\B_4)}(1 + 2\|\H\|_{C^1(\B_4)})$.
Hence, taking $\vep_0 < c_0d^\sigma/C$, we see that
\[
\u(x) - \mathscr{G}_{x_0,y_0,v_0}(x) < c_0 d^\sigma, 
\]
which, recalling \eqref{eqn: strict Gconvex}, implies that $x$ lives inside $B_d(x_0)$ and not on its boundary.
\end{proof}


\section{Higher Regularity}
\label{sec: HR}

Here, we prove a higher regularity version of Theorem~\ref{thm: close implies C1b}.
To do this, we will need a more refined comparison-type principle than the one established in Lemma~\ref{lem: u is close to ot soln}.
The comparison-type principle in this section makes use of a change of variables formula for the $\G$-exponential map, Lemma~\ref{lem: cov Gexp}, and the coincidence of the $\G$-subdifferential of $\u$ at $x$ and the $\G$-exponential map at $(x,\u(x),\nab \u(x))$ when $\u$ is differentiable, Remark~\ref{rmk: local to global}.

Given a $\G$-convex function $\w$ on an open set $\O$, we have defined $\T_\w(x) := \Gexp_{x,\w(x)}(\nab \w(x))$.
Yet even when $\w$ is not $\G$-convex, we may still consider $\T_\w(x)$ if $\{ (x,\w(x),\nab \w(x)) : x \in \O \} \subset \dom \Gexp$.

\begin{lemma}
\label{lem: cov Gexp}
Let $\O \subset \R^n$ be open, $\w \in C^2(\O)$, and $\{ (x,\w(x),\nab \w(x)) : x \in \O \} \subset \dom \Gexp$.
If
\[
D^2 \w(x) - D^2_x \G(x,\T_\w(x),\H(x,\T_\w(x),\w(x))) \geq 0 \quad \forall x \in \O,
\]
then for every Borel set $E \subset \O$,
\[
|\T_\w(E)| \leq \int_E \frac{\det(D^2 \w(x) - D^2_x \G(x,\T_\w(x),\H(x,\T_\w(x),\w(x))))}{|\det (\E(x,\T_\w(x),\H(x,\T_\w(x),\w(x))))|} \, \d x.
\]
In addition, if the map $\T_\w$ is injective, then equality holds.
\end{lemma}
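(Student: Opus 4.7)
The plan is to recognize this as an instance of the area formula for $C^1$ maps, for which the main work is the explicit computation of the Jacobian of $\T_\w$.

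First I would note that under the standing hypotheses $\G \in C^2_{\rm loc}$, $(G\text{-Twist})$ and $(G\text{-Nondeg})$, the implicit function theorem guarantees that $\Gexp$ (together with $\V$) is $C^1$ on its domain. Combined with $\w \in C^2(\O)$, this makes $\T_\w(x) = \Gexp_{x,\w(x)}(\nab \w(x))$ a $C^1$ map from $\O$ to $\R^n$. The classical area formula for $C^1$ maps then gives
\[
|\T_\w(E)| \leq \int_E |\det (\nab \T_\w(x))| \, \d x
\]
for every Borel $E \subset \O$, with equality whenever $\T_\w$ is injective. The lemma thus reduces to verifying the pointwise identity
\[
|\det (\nab \T_\w(x))| = \frac{\det(D^2 \w(x) - D^2_x \G(x,\T_\w(x),v(x)))}{|\det \E(x,\T_\w(x),v(x))|}, \qquad v(x) := \H(x,\T_\w(x),\w(x)).
\]

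To establish this identity, I would write $y(x) := \T_\w(x)$ and $v(x)$ as above, and exploit that by the very definition of $\Gexp$ and $\V$ these functions solve the system
\[
\G(x,y(x),v(x)) = \w(x), \qquad D_x\G(x,y(x),v(x)) = \nab \w(x).
\]
Differentiating the first identity in $x$ and using $D_x\G = \nab\w$ yields $D_y\G \cdot \nab y + D_v\G \cdot \nab v = 0$, hence
\[
\nab v = -\frac{D_y\G}{D_v\G}\cdot \nab y,
\]
which is legitimate by $(G\text{-Mono})$. Substituting this into the derivative of the second identity,
\[
D^2_x\G + D_{xy}\G\cdot \nab y + D_{xv}\G \otimes \nab v = D^2\w,
\]
gives
\[
\Bigl(D_{xy}\G - D_{xv}\G \otimes \frac{D_y\G}{D_v\G}\Bigr) \nab y \;=\; D^2\w - D^2_x\G,
\]
that is, $\E \cdot \nab \T_\w = D^2\w - D^2_x\G$. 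Taking determinants and invoking $(G\text{-Nondeg})$ to invert $\E$ yields
\[
\det(\nab \T_\w) = \frac{\det(D^2\w - D^2_x\G)}{\det \E}.
\]
The hypothesis $D^2\w - D^2_x\G \geq 0$ forces the numerator to be nonnegative, so absolute values can be placed only on $\det \E$, producing exactly the integrand in the statement.

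Combining the Jacobian formula with the area inequality gives the desired estimate, and the equality case follows from the equality case of the area formula under the injectivity assumption on $\T_\w$. The only step that requires any care is the implicit differentiation and the bookkeeping of where the derivatives of $\G$ are evaluated; both $(G\text{-Mono})$ (for dividing by $D_v\G$) and $(G\text{-Nondeg})$ (for inverting $\E$) are used in an essential way, but no delicate analytic issue arises since everything is classical once the $C^1$ regularity of $\T_\w$ is in hand.
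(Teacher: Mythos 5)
Your proposal is correct and takes essentially the same approach as the paper: compute the Jacobian of the $C^1$ map $\T_\w$ by implicit differentiation of the defining identities and then invoke the area formula, with the nonnegativity of $D^2\w - D^2_x\G$ allowing the absolute value to be dropped from the numerator and injectivity upgrading the inequality to an equality. The paper's proof is just more terse, stating only that one differentiates $\nab\w(x) = D_x\G(x,\T_\w(x),\H(x,\T_\w(x),\w(x)))$ and applies the area formula; you have spelled out the bookkeeping that produces $\E\cdot\nab\T_\w = D^2\w - D^2_x\G$, which is exactly the computation the paper leaves implicit.
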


\begin{proof}
After differentiating the identity
\[
\nab \w(x) = D_x \G(x,\T_\w(x),\H(x,\T_\w(x),\w(x))),
\]
we see that the Jacobian determinant of the $C^1$ map $x \mapsto \T_\w(x)$ is the integrand above.
Thus, applying the Area Formula (see, e.g., \cite{EG}) concludes the proof.
\end{proof}

\begin{remark}
\label{rmk: local to global}
Recall, by \eqref{eqn: Gsub in Gexp}, that if $\u$ is differentiable at $x$ and $\G$-convex, then
\[
\pa_\G \u(x) = \{ \Gexp_{x,\u(x)}(\nab \u(x)) \} = \{ \T_\u(x) \}.
\]
\end{remark}

Let $\co[E]$ denote the convex hull of the set $E$.
Also, recall that $\N_r(E)$ denotes the $r$-neighborhood of a set $E$.
The following comparison-type principle compares $\G$-convex functions of class $C^1$ and smooth solutions of Monge--Amp\`{e}re equations.

\begin{proposition}
\label{prop: comparison}
Let $R \geq 3$ and $\u \geq 0$ be a $\G$-convex function of class $C^1$ such that $\u(0) = 0$ and
\begin{equation}
\label{eqn: comparison subdiff section inclusion}
B_{1/R} \subset S:= \{ \u \leq 1 \}  \subset B_R.
\end{equation}
Assume $\B(R,S) := B_{2R} \times \T_\u(S) \times (-R^3,R^3) \subset \mathfrak{g}, \mathfrak{h}$ and $B_{2R} \times (-R^3,R^3) \times \nab \u(S) \Subset \dom \Gexp$.
Suppose that $f$ and $g$ are two densities such that $(\T_\u)_\# f = g$ and
\begin{equation}
\label{eqn: prop densities close to cst}
\bigg{\|} \frac{f}{\l_1} - 1 \bigg{\|}_{C^0(S)} + \bigg{\|} \frac{g}{\l_2} - 1 \bigg{\|}_{C^0(\T_\u(S))} \leq \vep
\end{equation}
for some constants $\l_1/\l_2 \in (1/2,2)$ and $\vep \in (0,1/4)$.
Furthermore, assume that
\begin{equation}
\label{eqn: prop G close to linear}
\|\G - x \cdot y + v\|_{C^2(\B(R,S))} + 
\|\H - x \cdot y + u\|_{C^2(\B(R,S))}
\leq \del.
\end{equation}
Then, there exists constants $\gamma = \gamma(n,R) \in (0,1)$ and $\del_1 = \del_1(n,R) > 0$ such that the following holds: if $\w$ is convex and satisfies
\[
\begin{cases}
\det (D^2 \w) = \l_1/\l_2 &\text{in } \N_{\del^\gamma}(\co[S])\\
\w = 1 &\text{on } \pa \N_{\del^\gamma}(\co[S]),
\end{cases}
\]
then
\begin{equation}
\label{eqn: prop u close to MA soln}
\|\u - \w\|_{C^0(S)} \leq K(\vep + \del^{\gamma/n})
\end{equation}
provided $\del \leq \del_1$.
Here, $K = K(n,R) > 0$.
\end{proposition}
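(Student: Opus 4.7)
The plan is to upgrade the qualitative closeness of Lemma~\ref{lem: u is close to ot soln} to a quantitative $O(\vep + \del^{\gamma/n})$ bound by combining Lemma~\ref{lem: cov Gexp} (which converts $(\T_\u)_\# f = g$ into a pointwise Monge--Amp\`{e}re identity for $\u$), Caffarelli's interior regularity for $\w$, and an Alexandrov-type comparison on $S$.

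For $\w$: once $\del$ is small, $\Om := \N_{\del^\gamma}(\co[S])$ is a bounded convex open set with $B_{1/(2R)} \subset \Om \subset B_{2R}$, and $\det D^2 \w = \l_1/\l_2 \in (1/2,2)$ there, so Caffarelli's theory gives that $\w$ is strictly convex and $C^\infty$ in the interior of $\Om$, with $\|\nab \w\|_{L^\infty(\Om)} \leq C(n,R)$. Convexity and $\w = 1$ on $\pa \Om$ then force $\w \leq 1$ throughout, and the Lipschitz bound gives $1 - C(n,R)\del^\gamma \leq \w \leq 1$ on $\pa S \subset \Om$.

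For $\u$: since $\u$ is $C^1$ and $\G$-convex, Remark~\ref{rmk: local to global} gives $\pa_\G \u(x) = \{\T_\u(x)\}$ pointwise on $S$, and $\T_\u$ is injective almost everywhere (its inverse is $\S_\v$ for $\v := \u_\G$, by \eqref{eqn: transport is invertible}). Applying Lemma~\ref{lem: cov Gexp} with equality together with $(\T_\u)_\# f = g$, \eqref{eqn: prop densities close to cst}, and the $C^0$ bounds $\|D^2_x \G\|_{C^0}, \|\E - \Id\|_{C^0} \leq C\del$ implied by \eqref{eqn: prop G close to linear}, I obtain
\[
\Big| \det\big(D^2 \u - D^2_x \G(\cdot, \T_\u, \H(\cdot, \T_\u, \u))\big) - \tfrac{\l_1}{\l_2} \Big| \leq K(\vep + \del) \quad \text{a.e.\ in } S.
\]
Since \eqref{eqn: second derivative} and $\|D^2_x\G\|_{C^0} \leq \del$ give $D^2 \u \geq -\del\,\Id$, the convex regularization $\hat\u(x) := \u(x) + \del|x|^2$ is convex on $S$ with $\|\hat\u - \u\|_{C^0(S)} \leq \del R^2$; expanding $\det(D^2\u + 2\del\,\Id)$ around the identity above yields $|\det D^2 \hat\u - \l_1/\l_2| \leq K'(\vep + \del)$ a.e.\ in $S$.

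The comparison is then a two-sided Alexandrov-type argument between the convex functions $\hat\u$ and $\w$: on $\pa S$, $|\hat\u - \w| \leq C \del^\gamma$ by the first paragraph, while in the interior both have Monge--Amp\`{e}re measures within $O(\vep + \del)$ of $(\l_1/\l_2)\,dx$. The classical $L^\infty$ stability of Alexandrov solutions (a direct ABP on $\hat\u - \w$ after tilting by linear functions of slope up to $cM/\diam(S)$) yields $\|\hat\u - \w\|_{C^0(S)} \leq K(\del^\gamma + (\vep + \del)^{1/n})$, and absorbing $\|\u - \hat\u\|_{C^0}$ gives \eqref{eqn: prop u close to MA soln}. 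The exponent $\gamma = \gamma(n,R) \in (0,1)$ is chosen to balance the $\del^\gamma$ boundary slack (from $\w$ being solved on the $\del^\gamma$-enlargement of $\co[S]$) against the $\del^{\gamma/n}$ interior ABP error. The main obstacle I expect is the clean execution of this last step: because the pointwise identity from Lemma~\ref{lem: cov Gexp} involves $D^2_x \G$ evaluated along the nonlinearly coupled map $\T_\u$, verifying that the convexification absorbs this dependence into an $O(\del)$ perturbation of the determinant (and that almost-everywhere injectivity of $\T_\u$ is enough for the ABP tilting argument) is the delicate point.
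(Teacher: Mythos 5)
Your strategy is genuinely different from the paper's, and it has a gap at the decisive step. The paper proves \eqref{eqn: prop u close to MA soln} by a barrier argument: after showing $S$ is $(C_R\delta)$-semiconvex (so the estimates of \cite[Prop.~5.2]{DF} on $\w$ apply), it builds $\w^\pm = (1\pm 3\vep \pm \cdot\,\del^{1/2})\w \mp\cdots$, and on the contradiction set $Z := \{\u < \w^+\}$ (resp.\ $W_\eta$) it carefully translates $v$-parameters of $\G$-supports to show $\T_{\w^+}(Z) \subset \T_\u(Z)$. It then compares $|\T_{\w^+}(Z)|$ (computed via Lemma~\ref{lem: cov Gexp}, which holds with equality because $\w^+$ is uniformly convex and $C^2$, hence $\T_{\w^+}$ is genuinely injective) against $|\T_\u(Z)|$ (computed from the push-forward). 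Your plan replaces this with an ABP comparison between $\w$ and the convexification $\hat\u = \u + \del|x|^2$.

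The difficulty you flag at the end is the real one, but it is more severe than a bookkeeping issue. An Alexandrov-type $L^\infty$ stability estimate compares Monge--Amp\`ere \emph{measures}, and for a convex $C^1$ function $\hat\u$ the measure is $\omega(\hat\u)(E) = |\nabla\hat\u(E)|$, whose singular part is not controlled by the a.e.\ bound $|\det D^2\hat\u - \l_1/\l_2| \lesssim \vep + \del$: in general $\omega(\hat\u) \geq \det D^2\hat\u \,\d x$, and the inequality can be strict on sets of measure zero even when $\hat\u \in C^1$. The push-forward condition does control $|\T_\u(E)|$, but $\nabla\hat\u(x) = \Theta_x(\T_\u(x)) + 2\del x$ with $\Theta_x(y) := D_x\G(x,y,\H(x,y,\u(x)))$, and since $\Theta_x$ varies with $x$ and $\T_\u$ is merely continuous, the Hausdorff-distance bound $|\Theta_x(y)-y| \lesssim \del$ does \emph{not} yield $|\nabla\hat\u(E)| \lesssim |\T_\u(E)| + O(\del)$ for irregular $E$. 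So the step ``verifying that the convexification absorbs this dependence into an $O(\del)$ perturbation of the determinant'' does not go through as stated; you would need an independent argument that $\omega(\hat\u)$ has no singular part and that its absolutely continuous part matches the a.e.\ determinant, which is precisely what the paper's barrier argument is engineered to avoid. The paper never needs to control the Monge--Amp\`ere measure of any convexification of $\u$: it only ever applies Lemma~\ref{lem: cov Gexp} to the smooth, uniformly convex barriers $\w^\pm$, where the change-of-variables formula is exact, and uses $(\T_\u)_\#f = g$ directly for $\u$. You also omit the preliminary step showing $S$ is semiconvex, which is needed to invoke the boundary and interior bounds on $\w$ in the non-convex domain $S$.
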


The proof of Proposition~\ref{prop: comparison} follows the proof of \cite[Proposition~5.2]{DF}.
Yet because the map $\T_\u$ depends on $x$, $\u$, and $\nab \u$ and not just on $x$ and $\nab \u$, the argument is more delicate.

\begin{proof}
Recall that $\u + \delta|x|^2$ is convex by \eqref{eqn: prop G close to linear}.
Thus, as $\u(0) = 0$, $\u = 1$ on $\pa S$, and $S \subset B_R$, using \eqref{eqn: first derivative}, it follows that
\begin{equation}
\label{eqn: DxG bounded below}
|D_x \G(x,y,v)| = |\nab \u(x)| \geq |\nab \u(x) + 2\delta x| - 2\delta|x| \geq \frac{1}{R} - 2\delta R \geq \frac{1}{2R} \quad \forall x \in \pa S
\end{equation}
provided that $\del$ is small enough.
Here, $y := \T_\u(x)$ and $v := \H(x,y,\u(x))$.
Now consider
\[
\mathcal{S} := \bigcap_{x \in \pa S} E_{x}
\]
where
\[
E_{x} := \{ z \in B_R : \G(z,y,v) \leq 1 \}.
\]
Clearly, $S \subset \mathcal{S}$.
Let $z \notin S$ and $x \in \pa S$ be a point such that $\dist(z,\pa S) = |x-z| > 0$.
If $|z-x| < 1/\delta R$, then using \eqref{eqn: prop G close to linear} and \eqref{eqn: DxG bounded below}, we find that
\[
\G(z,y,v) - 1 \geq|\nab \u(x)||z-x| - \frac{\delta}{2}|z-x|^2 > 0,
\]
and $z \notin \mathcal{S}$.
On the other hand, if $|z-x| \geq 1/\delta R$, then by \eqref{eqn: prop G close to linear} and \eqref{eqn: DxG bounded below}, we have that
\[
\begin{split}
\G(z,y,v) - 1 &\geq |z-x||\nab \u(x)| + (z-x)\cdot(y-D_x\G(x,y,v)) -2\del\\
&\geq \frac{1}{2\delta R^2} - 2R\del - 2\del.
\end{split}
\]
And so $\G(z,y,v) - 1 > 0$ and $z \notin \mathcal{S}$ provided that $\del$ is sufficiently small.
In turn,
\[
\mathcal{S} = S
\]
if $\del > 0$ is sufficiently small depending only on $R$.
It follows that $S$ is a ($C_R\delta$)-semiconvex set.
Now arguing exactly as in \cite[Proposition 5.2]{DF}, we have that 
\begin{equation}
\label{eqn: oscill}
\osc_S \w \leq K_{R,n},
\end{equation}
\begin{equation}
\label{eqn: bounds on w}
1 - K_{R,n} \del^{\gamma/n} \leq \w < 1 \quad\text{on } \pa S,
\end{equation}
and
\begin{equation}
\label{eqn: bounds on D2w}
D^2 \w \geq \frac{\del^{\gamma/\tau}}{K_{R,n}} \Id \quad\text{in } \co[S]
\end{equation}
for some constants $K_{R,n}$ and $\tau > 0$ depending only on dimension and $R$.

Define
\[
\w^+ := (1 + 3\vep + 2\del^{1/2})\w - 3\vep - 2\del^{1/2}
\]
and
\[
\w^- := \bigg(1 - 3\vep - \frac{\del^{1/2}}{2}\bigg)\w + 3\vep + \frac{\del^{1/2}}{2} + K_{R,n} \del^{\gamma/n}.
\]
We claim that if $\gamma$ is sufficiently small, then $\w^- \geq \u \geq \w^+$ in $S$.
If so, then \eqref{eqn: oscill} will imply \eqref{eqn: prop u close to MA soln}, as desired.

Choose $\gamma := \tau/4$.
By \eqref{eqn: bounds on w}, we have that $\w^- > \u > \w^+$ on $\pa S$.
We first show that $\u \geq \w^+$ in $S$.
Suppose not.
Then, as $\u > \w^+$ on $\pa S$, we see that
\[
\emptyset \neq Z := \{ \u < \w^+ \} \Subset S.
\]
Thanks to \eqref{eqn: bounds on D2w} and \eqref{eqn: prop G close to linear}, we have that
\begin{equation}
\label{eqn: uni convex w plus}
D^2 \w^+ > D^2\w \geq \frac{\del^{1/4}}{K_{R,n}}\Id > \del \Id \geq \|D^2_x\G\|_{C^0(\B(R,S))} \Id \quad \text{in } \co[S]
\end{equation}
provided that $\del$ is sufficiently small depending on $R$ and $n$.
Notice that $\w^+(Z) \subset (-2R^2,1)$.\footnote{\,By \eqref{eqn: bounds on w} and the convexity of $\w^+$, we see that $\w^+ < 1$ in $Z \subset S$.
The inclusion $\w^+(Z) \subset (-2R^2,1)$ then follows from considering the lower barrier (for $\w$)
\[
\frac{\l_1^{1/n}}{2\l_2^{1/n}}(|x|^2 - (R+\del^\gamma)^2) + 1
\]
and taking $\delta$ smaller if needed.
}
Moving any supporting plane to $\w^+$ in $Z$ down and then up until it touches $\u$ from below, we see that
\[
\nab \w^+(Z) \subset \nab \u(Z).
\]
It follows that $\{ (x,\w^+(x),\nab \w^+(x)) : x \in Z \} \subset \dom \Gexp$.
Let $x_0^+ \in Z$, $y_0 := \T_{\w^+}(x_0^+)$, and $v_0^+ := \H(x_0^+,y_0,\w^+(x_0^+))$.
Increase and then decrease $v_0^+$ to $v_0$ so that $\G(\cdot,y_0,v_0)$ touches $\u$ from below at $x_0$.
Recall that $\T_{\w^+}(x_0^+) = y_0$ if and only if $\nab \w^+(x_0^+) = D_x\G(x_0^+,y_0,v_0^+)$.
(See Section~\ref{sec: prelim Gconvex}.)
Hence, from \eqref{eqn: uni convex w plus} and as $\G(x_0^+,y_0,v_0^+) = \w^+(x_0^+)$, we have that
\[
\G(x,y_0,v_0^+) \leq \w^+(x_0^+) + \nab\w^+(x_0^+) \cdot (x-x_0^+) + \frac{\del}{2}|x-x_0^+|^2 \leq \w^+(x) \quad\forall x \in \co[S].
\]
In turn, $x_0 \in Z$.
Indeed, if not, then
\[
\G(x_0,y_0,v_0) = \u(x_0) \geq \w^+(x_0) \geq \G(x_0,y_0,v_0^+),
\]
from which using (G-Mono), it follows that
\[
\w^+(x_0^+) > \u(x_0^+) \geq \G(x_0^+,y_0,v_0) \geq \G(x_0^+,y_0,v_0^+) = \w^+(x_0^+).
\]
Impossible; and we deduce that 
\begin{equation}
\label{eqn: Gexp w plus in Tu}
\T_{\w^+}(Z) \subset \T_\u(Z).
\end{equation}
Now for any $x \in Z$, from \eqref{eqn: bounds on D2w} and taking $\del$ even smaller, we compute that
\[
\begin{split}
D^2 \w^+(x) - D^2_x \G(x,\T_{\w^+}(x),\H(x,\T_{\w^+}(x),\w^+(x)))
\geq  (1 + 3\vep+ \del^{1/2})D^2 \w(x).
\end{split}
\]
And so by \eqref{eqn: prop G close to linear}, we see that
\[
\begin{split}
\frac{\det(D^2 \w^+(x) - D^2_x \G(x,\T_{\w^+}(x),\H(x,\T_{\w^+}(x),\w^+(x))))}{|\det (\E(x,\T_{\w^+}(x),\H(x,\T_{\w^+}(x),\w^+(x))))|}
\geq \frac{(1+ 3\vep+\del^{1/2})^n}{(1+ \del)^n}\frac{\l_1}{\l_2}.
\end{split}
\] 
Moreover, from \eqref{eqn: uni convex w plus}, for any $x,z \in Z$ with $x \neq z$, setting $y := \T_{\w^+}(x)$ and $v := \H(x,y,\w^+(x))$, we determine that
\[
\w^+(z) - \G(z,y,v) = \frac{1}{2}\int_0^1 \big(D^2\w^+(tz + (1-t)x) - D^2_x\G(tz + (1-t)x,y,v)\big)(z-x)\cdot (z-x) \, \d t > 0.
\]
In other words, the function $\G(\cdot,y,v)$ only touches $\w^+$ at $x$, and the map $x \mapsto \T_{\w^+}(x)$ is injective in $Z$.
Therefore, Lemma~\ref{lem: cov Gexp} yields
\[
|\T_{\w^+}(x)| \geq \frac{(1+ 3\vep +\del^{1/2})^n}{(1+\del)^n}\frac{\l_1}{\l_2}|Z| > (1+3\vep)\frac{\l_1}{\l_2}|Z|
\]
if $\del$ is small enough depending only on $R$ and $n$.
On the other hand, since $\u$ is $C^1$ in $S$, the push-forward condition and \eqref{eqn: prop densities close to cst} imply that
\[
|\T_\u(Z)| = \int_Z \frac{f(x)}{g(\T_\u(x))} \, \d x \leq \frac{1+\vep}{1-\vep}\frac{\l_1}{\l_2}|Z|.
\]
Combining these last two inequalities, we find that \eqref{eqn: Gexp w plus in Tu} is impossible unless $Z$ is empty.
That is, $\w^+ \leq \u$ in $S$.

The argument showing that $\u \leq \w^-$ in $S$ is similar to the one just presented, showing that $\u \geq \w^+$ in $S$.
So we only provide a sketch.
Again, suppose, to the contrary, that $W := \{ \u > \w^- \}$ is non-empty.
Now we can find a positive constant $\mu$ so that $\u$ touches $\w^- + \mu$ from below in $S$.
As both $\u$ and $\w^-$ are $C^1$, it follows that $\nab \u = \nab \w^-$ on the set $\{ \u = \w^- + \mu \}$.
Therefore, if $\eta > 0$ is sufficiently small, then the set $W_\eta : = \{ \u > \w^- + \mu - \eta \}$ is non-empty and  $\nab \w^-(W_\eta)$ is contained in a small neighborhood of $\nab \u(W_\eta)$.

Set $\w^-_\eta := \w^- + \mu - \eta$.
Then, using the same barrier as before, we find that $\w^-_\eta \in (-R^2,2)$.
Hence, $\{ (x,\w^-_\eta(x),\nab \w^-_\eta(x)) : x \in W_\eta\} \subset \dom \Gexp$.
Let $\G(\cdot,y_0,v_0^-)$ be the $\G$-support for $\u$ at $x_0^- \in W_\eta$.
Increase and then decrease $v_0^-$ to $v_0$ so that $\G(\cdot,y_0,v_0)$ touches $\w^-_\eta$ from below, and let $x_0$ be the point at which  $\G(\cdot,y_0,v_0)$ touches $\w^-_\eta$ from below.
Notice that $x_0 \in W_\eta$ and $v_0 = \H(x_0,y_0,\w^-_\eta(x_0))$.
Therefore, $D_x\G(x_0,y_0,v_0) = \nab \w_\eta^-(x_0)$; that is, $y_0 \in \T_{\w_\eta^-}(W_\eta)$ or
\[
\T_{\u}(W_\eta) \subset \T_{\w_\eta^-}(W_\eta).
\]
Observe that from \eqref{eqn: bounds on D2w},
\[
(1- 3\vep - \del^{1/2})D^2 \w(x) \leq D^2 \w^-_\eta(x) - D^2_x \G(x,\T_{\w^-_\eta}(x),\H(x,\T_{\w^-_\eta}(x),\w^-_\eta(x)))
\]
if $\del^{3/4} \geq 2K_{R,n}\del$.
Also, taking $\del$ even smaller (so that $\del^{3/4} \geq 4K_{R,n}\del$ or, equivalently, $\del^{1/2} \geq 4K_{R,n}\del^{3/4}$), we find that
\[
D^2 \w^-_\eta(x) - D^2_x \G(x,\T_{\w^-_\eta}(x),\H(x,\T_{\w^-_\eta}(x),\w^-_\eta(x))) 
\leq \bigg(1 - 3\vep - \frac{\del^{1/2}}{4}\bigg)D^2 \w.
\]
Without loss of generality, we assume that $\del^{1/2} \leq 1/4$; whence, $1 - 3\vep - \del^{1/2} \geq 0$.
Hence, by Lemma~\ref{lem: cov Gexp},
\[
|\T_{\w_\eta^-}(W_\eta)| \leq \frac{(1  - 3\vep - \del^{1/2}/4)^n}{(1-\del)^{n}}\frac{\l_2}{\l_2}|W_\eta| < (1-3\vep)\frac{\l_2}{\l_2}|W_\eta|.
\]
Moreover,
\[
|\T_\u(W_\eta)| \geq \frac{1-\vep}{1+\vep}\frac{\l_1}{\l_2}|W_\eta|.
\]
Like before, combining these last two inequalities, we arrive at a contradiction unless $|W_\eta| = 0$, so long as $\del$ is sufficiently small depending on $R$ and dimension. 
\end{proof}

With Proposition~\ref{prop: comparison} in hand, our next proposition is a higher regularity version of Proposition~\ref{prop: C1b ptwise}.

\begin{proposition}
\label{prop: hr ptwise}
In addition to the hypotheses of Proposition~\ref{prop: C1b ptwise}, suppose $f \in C^{0,\alpha}(\C)$, $g \in C^{0,\alpha}(\K)$, and $B_{6} \times (-27,27) \times B_{6} \subset \dom \Gexp$.
There exist positive constants $\del' \leq \del$ and $\eta' \leq \eta$ such that the following holds: if
\[
\|f - 1\|_{L^\infty(\C)} + \|g - 1\|_{L^\infty(\K)} \leq \del',
\]
\begin{equation}
\label{eqn: G small hr assump}
\|\G - x \cdot y + v\|_{C^2(\B_3)} + \|\H - x \cdot y + v\|_{C^2(\B_3)} \leq \del',
\end{equation}
and
\[
\bigg{\|}\u - \frac{1}{2}|x|^2\bigg{\|}_{C^0(B_3)} \leq \eta',
\]
then $\u \in C^{2,\alpha'}(0)$ for some $\alpha' < \alpha$.
\end{proposition}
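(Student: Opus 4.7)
The plan is to run an improvement-of-flatness iteration at the origin, using Proposition~\ref{prop: comparison} in place of the purely qualitative Lemma~\ref{lem: u is close to ot soln}. The added regularity on $\G$, $f$, and $g$ is what converts the qualitative compactness argument of Proposition~\ref{prop: C1b ptwise} into a quantitative one, producing a geometric decay rate for the error at each dyadic (parabolic) scale. The outcome is a $C^{2,\alpha'}$ Taylor expansion of $\u$ at $0$.

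First I would repeat the initial steps of the proof of Proposition~\ref{prop: C1b ptwise} to obtain, from the starting data $(\G,\u,f,g)$, a normalized section $S = S_\G(\u,h_0)$ that is trapped between two concentric ellipsoids of controlled eccentricity, together with a symmetric matrix $A$ with $\det A=1$ satisfying \eqref{eqn: thm matrix is uni elliptic}. Instead of merely extracting $A = [D^2\w(0)]^{-1/2}$ as in Step 2 there, I would apply Proposition~\ref{prop: comparison} with $R$ a dimensional constant, $\lambda_1=f(0)$, $\lambda_2=g(0)$, to obtain a convex solution $\w$ of $\det D^2\w = f(0)/g(0)$ on $\N_{\delta^\gamma}(\co[S])$ satisfying
\begin{equation*}
\|\u-\w\|_{C^0(S)}\le K(\vep+\delta^{\gamma/n}).
\end{equation*}
Since $f,g\in C^{0,\alpha}$ and the data are close to constant, \eqref{eqn: prop densities close to cst} holds with $\vep\le C\,\text{diam}(S)^\alpha$. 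By Caffarelli's interior $C^{2,\alpha}$-theory for Monge--Amp\`{e}re, $\w$ admits, near $0$, a second-order Taylor expansion $\w(x)=\w(0)+\nabla\w(0)\cdot x+\tfrac12 B x\cdot x + O(|x|^{2+\alpha})$ with $B$ symmetric positive definite and $\det B = f(0)/g(0)$. Combining these two facts shows that $\u$ is quantitatively close, on the scale $h_0^{1/2}$, to the paraboloid $\tfrac12 B(x-x_\ast)\cdot(x-x_\ast)$ up to an additive error controlled by $C\bigl(\delta^{\gamma/n}+h_0^{(2+\alpha)/2}\bigr)$.

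Next I would perform the rescaling of Step 3 of Proposition~\ref{prop: C1b ptwise}, but with this improved quadratic profile. Set $A_1:=B^{-1/2}$ and define
\begin{equation*}
\G_1(\tilde x,\tilde y,\tilde v):=\frac{\G(h_0^{1/2}A_1\tilde x,h_0^{1/2}A_1^{-1}\tilde y,h_0\tilde v)}{h_0},\qquad \u_1(\tilde x):=\frac{\u(h_0^{1/2}A_1\tilde x)}{h_0},
\end{equation*}
and analogously $\H_1$, $f_1$, $g_1$. Because $\G\in C^{2,\alpha}$ near the origin, the parabolic scaling yields
\begin{equation*}
\|\G_1-\tilde x\cdot\tilde y+\tilde v\|_{\mathscr{C}^2(\B_3)}\le C h_0^{\alpha/2}\|\G\|_{C^{2,\alpha}(\B_4)},
\end{equation*}
and similarly $\|f_1-1\|_{L^\infty}+\|g_1-1\|_{L^\infty}\le C h_0^{\alpha/2}$. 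The improved closeness of $\u$ to the paraboloid also gives $\|\u_1-\tfrac12|\tilde x|^2\|_{C^0(B_3)}\le C(\delta^{\gamma/n}/h_0+h_0^{\alpha/2})$. Choosing $h_0$ small and then $\delta'$ small enough that $\delta^{\gamma/n}\ll h_0^{1+\alpha/2}$, the rescaled problem satisfies the hypotheses of the proposition with strictly smaller parameters $\delta_1\le \rho\,\delta'$ and $\eta_1\le\rho\,\eta'$ for some $\rho=\rho(h_0,\alpha)<1$.

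Iterating this construction, one gets sequences $\G_k,\u_k,A_k$ with $\delta_k,\eta_k\le \rho^k\delta'$ and a sequence of quadratic polynomials $Q_k(x):=\tfrac12 M_k^{-1}x\cdot M_k^{-1}x$ (with $M_k=A_1\cdots A_k$) approximating $\u$ at scale $h_0^{k/2}$: precisely, $\|\u-Q_k\|_{C^0(M_kB_{3h_0^{k/2}})}\le \eta_k h_0^k$, so that
\begin{equation*}
\bigl|\u(x)-\tfrac12 M_\infty^{-1}x\cdot M_\infty^{-1}x\bigr|\le C|x|^{2+\alpha'},\qquad \alpha':=\min\{\alpha,\,\tfrac{\log\rho^{-1}}{\log h_0^{-1/2}}\}-\varepsilon_0,
\end{equation*}
where $M_\infty=\lim_k M_k$ exists by a geometric Cauchy estimate on $\|A_{k+1}-\Id\|\le C\rho^k$. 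This is precisely a $C^{2,\alpha'}$-expansion of $\u$ at $0$.

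The main obstacle is the bookkeeping at the rescaling step: one must verify that after the change of coordinates by $A_k$, the generating function $\G_k$ still satisfies the normalizations \eqref{eqn: extra 1}--\eqref{eqn: extra 2} up to errors that are absorbed in $\delta_{k+1}$, and that the admissibility sets $\B_3\subset\mathfrak{g}_k,\mathfrak{h}_k$ and $\dom\Gexp\supset B_6\times(-27,27)\times B_6$ persist along the iteration. This is where the $C^{2,\alpha}$ assumption on $\G$ (rather than just $C^2$) is used decisively: it provides the $h_0^{\alpha/2}$ gain in the $\mathscr{C}^2$-norm under parabolic rescaling that closes the iteration, and simultaneously forces $\alpha'<\alpha$ because the comparison error $\delta^{\gamma/n}$ introduces an exponent loss $\gamma/n<1$.
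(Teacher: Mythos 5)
Your proposal follows the same route the paper takes, which is to run the improvement-of-flatness iteration of \cite[Theorem~5.3]{DF} with Proposition~\ref{prop: comparison} supplying the comparison principle and the $C^{2,\alpha}$ regularity of $\G,\H$ together with the normalizations \eqref{eqn: extra 1}--\eqref{eqn: extra 2} producing the $h_0^{\alpha/2}$ decay of the $\mathscr{C}^2$-norms under parabolic rescaling. One point to tighten in your bookkeeping: since \eqref{eqn: extra 1}--\eqref{eqn: extra 2} are conditions at the origin and the rescaling is by a symmetric $A_k$ fixing the origin, they are preserved exactly (not merely up to errors absorbed in $\delta_{k+1}$), and they force $\E_k(0,0,0)=\Id$, which is precisely what controls the third term in \eqref{eqn: G small hr assump}.
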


The proof of Proposition~\ref{prop: hr ptwise} follows arguing exactly as in the proof of \cite[Theorem~5.3]{DF}.
That said, let us make some remarks.
An inspection of the proof of \cite[Theorem~5.3]{DF} reveals that, apart from a comparison-type principle like Proposition~\ref{prop: comparison}, we will need that the sum of the norms in \eqref{eqn: G small hr assump} decays under parabolically quadratic rescalings.
(The remainder of the proof uses classical estimates for the Monge--Amp\`{e}re equation.)
The assumption $\G,\H \in C^{2,\alpha}(\B_3)$ plus \eqref{eqn: extra 1} and \eqref{eqn: extra 2} guarantee this decay.
Finally, since the domain of the $\Gexp$ map includes the product of three open sets at the beginning, the third of which compactly contains $\nab \u(B_{1/3})$, the set inclusions in the hypotheses of Proposition~\ref{prop: comparison} will be satisfied at each stage of the iteration by construction; we are zooming in with the correct rescaling.
So applying Proposition~\ref{prop: hr ptwise} at every point in $B_{1/7}$ and then classical Schauder estimates, we obtain our final theorem.

\begin{theorem}
\label{thm: hr}
In addition to the hypotheses of Theorem~\ref{thm: close implies C1b}, suppose $\G,\H \in C^{k,\alpha}(\B_4)$, $f \in C^{k,\alpha}(\C)$, $g \in C^{k,\alpha}(\K)$, for some $k \geq 0$ and $\alpha \in (0,1)$, and $B_{8} \times (-64,64) \times B_{8} \subset \dom \Gexp$.
There exist positive constants $\del_1 \leq \del_0$ and $\eta_1 \leq \eta_0$ such that the following holds: if
\[
\|f - \1_{\C}\|_{L^\infty(B_4)} + \|g - \1_{\K}\|_{L^\infty(B_4)} \leq \del_1,
\]
\[
\|\G - x \cdot y + v\|_{C^2(\B_4)} + \|\H - x \cdot y + v\|_{C^2(\B_4)} \leq \del_1,
\]
and
\[
\bigg{\|}\u - \frac{1}{2}|x|^2\bigg{\|}_{C^0(B_4)} \leq \eta_1,
\]
then $\u \in C^{k+2,\alpha}(B_{1/8})$.
\end{theorem}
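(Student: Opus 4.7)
The plan is to reduce Theorem~\ref{thm: hr} to Proposition~\ref{prop: hr ptwise} applied at every point of $B_{1/8}$, exactly mirroring how Theorem~\ref{thm: close implies C1b} was reduced to Proposition~\ref{prop: C1b ptwise}. First I would fix an arbitrary $x_0 \in B_{1/8}$ and repeat the change of variables from the proof of Theorem~\ref{thm: close implies C1b}: translate so that $(x_0, \T_\u(x_0), \H(x_0,\T_\u(x_0),\u(x_0)))$ becomes the origin, subtract the $\G$-support at $x_0$ from $\u$, rescale using the matrix $M_0 = \E(x_0,y_0,v_0)$ and $a_0 = -D_v\G(x_0,y_0,v_0)$, and pass to the new generating function $\bar{\G}$ and dual $\bar{\H}$. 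The estimates \eqref{eqn: size of hat M and hat a} and the computations already performed in the proof of Theorem~\ref{thm: close implies C1b} show that $\bar{\G}$, $\bar{\H}$, $\bar{f}$, $\bar{g}$ satisfy \eqref{eqn: extra 1}, \eqref{eqn: extra 2}, and the smallness and closeness hypotheses of Proposition~\ref{prop: hr ptwise} (with a possibly smaller $\del_1$), and that the $C^{k,\alpha}$-regularity of $\G, \H, f, g$ is preserved by this affine change of variables. The inclusion $B_8 \times (-64,64) \times B_8 \subset \dom \Gexp$ together with the $\mathscr{C}^2$-closeness to the identity guarantee that, for $x_0 \in B_{1/8}$, the rescaled generating function still has the corresponding inclusion $B_6 \times (-27,27) \times B_6 \subset \dom(\bar\G\text{-exp})$ required in Proposition~\ref{prop: hr ptwise}.

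Once this verification is complete, Proposition~\ref{prop: hr ptwise} produces an $\alpha' = \alpha'(n,\alpha) \in (0,\alpha)$ and a uniform modulus $r \mapsto r^{2+\alpha'}$ controlling the distance of $\u$ (translated back) from its second-order Taylor polynomial at every $x_0 \in B_{1/8}$. Since this pointwise estimate holds uniformly in $x_0$, standard arguments upgrade it to $\u \in C^{2,\alpha'}(B_{1/8})$. At this point the right-hand side of the Monge--Amp\`ere-type equation \eqref{eqn: pointwise jacobian eqn} becomes, after composing with the continuous map $\T_\u$ and the $C^2$ functions $\H$ and $D^2_x\G$, a function of class $C^{0,\alpha'}$; moreover $\u$ is now a genuine $C^2$ solution and the Hessian $D^2\u - D^2_x\G(x,\T_\u(x),\cdot)$ is uniformly positive definite and bounded in $B_{1/8}$ (from the nondegeneracy of $\E$ and the lower bounds on $f/g$), so the equation is uniformly elliptic.

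With uniform ellipticity and $C^{0,\alpha'}$ data in hand, the classical interior Schauder theory for Monge--Amp\`ere--type equations (Caffarelli's $W^{2,p}$/$C^{2,\alpha}$ estimates, or equivalently the concavity of $F(M) = \log\det M$ combined with Evans--Krylov and Schauder for fully nonlinear equations) upgrades $\u$ to $C^{2,\alpha}$ on a slightly smaller ball. The final bootstrap to $C^{k+2,\alpha}(B_{1/8})$ is then obtained by differentiating \eqref{eqn: pointwise jacobian eqn} (or equivalently the equation $\nab \u(x) = D_x\G(x,\T_\u(x), \H(x,\T_\u(x),\u(x)))$ coupled with the pushforward identity) and applying linear Schauder estimates iteratively: at each step, having $\u \in C^{j+2,\alpha}$ means the coefficients and inhomogeneity of the linearized equation satisfied by $\pa^\gamma \u$ (for $|\gamma|=1$) lie in $C^{j,\alpha}$, giving $\pa^\gamma \u \in C^{j+2,\alpha}$, hence $\u \in C^{j+3,\alpha}$. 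Iterating up to $j = k-1$ yields the conclusion.

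The main obstacle is verifying that the pointwise change of variables at an arbitrary $x_0 \in B_{1/8}$ preserves all of the structural normalizations required by Proposition~\ref{prop: hr ptwise} with \emph{uniform} smallness constants; this is essentially bookkeeping and was already carried out for $C^{1,\beta}$ in the proof of Theorem~\ref{thm: close implies C1b}, but one must also check that the higher regularity assumptions on $\G$, $\H$, $f$, $g$ transfer under the affine change of variables with norms depending only on the initial data, so that $\del', \eta'$ may be chosen independently of $x_0$. The Schauder bootstrap itself is standard and presents no novel difficulty beyond carefully tracking the dependence of the linearized coefficients on $\G$, $\H$, and $\T_\u$ through the identities in Section~\ref{sec: duality of structure conditions}.
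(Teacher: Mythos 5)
Your proposal matches the paper's own (very terse) argument: apply Proposition~\ref{prop: hr ptwise} at every interior point via the same change of variables used for Theorem~\ref{thm: close implies C1b}, deduce uniform $C^{2,\alpha'}$ regularity, then invoke classical Schauder estimates and bootstrap to $C^{k+2,\alpha}$. The only bookkeeping slip is that you should apply the pointwise proposition on a slightly larger ball than $B_{1/8}$ (the paper uses $B_{1/7}$) so the interior Schauder estimates have room to shrink back to $B_{1/8}$.
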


\noindent{\bf Acknowledgments.}
I would like to thank Jun Kitagawa for a careful reading of preliminary versions of this paper and discussions and Nestor Guillen for a helpful conversation.
Also, I am grateful to both Connor Mooney and Alessio Figalli for their support, encouragement, and valuable conversations.
Finally, I wish to thank the referee for his or her detailed reading and comments.


\end{document}